\providecommand{\U}[1]{\protect\rule{.1in}{.1in}}
\newtheorem{theorem}{Theorem}[section]
\newtheorem{lemma}[theorem]{Lemma}
\newtheorem{corollary}[theorem]{Corollary}
\newtheorem{proposition}[theorem]{Proposition}
\newtheorem{definition}[theorem]{Definition}
\newtheorem{remark}[theorem]{Remark}
\numberwithin{equation}{section}
\newcommand{\beqa}{\begin{eqnarray*}}
\newcommand{\eeqa}{\end{eqnarray*}}
\newcommand{\field}[1]{\mathbb{#1}}
\newcommand{\bR}{\field{R}}
\def\la{\lambda}
\def\rd{\bR^d}
\def\rdd{{\bR^{2d}}}
\def\lrd{L^2(\rd)}
\def\<{\left<}
\def\>{\right>}
\def\mv1{M_v^1}
\def\Ren{\mathbb{R}^d}
\def\Fur{\mathcal{F}}
\def\Sn2{S_{2}(L^{2}(\Ren))}
\def\S1{S_{1}(L^{2}(\Ren))}
\def\sig00{\sigma_{0,0}}
\def\la{\langle}
\def\ra{\rangle}
\begin{document}
\title[Generalized Born--Jordan Distributions]{Generalized Born--Jordan Distributions and Applications}
\author{Elena Cordero}
\address{Dipartimento di Matematica, Universit\`a di Torino, Dipartimento di
Matematica, via Carlo Alberto 10, 10123 Torino, Italy}
\email{elena.cordero@unito.it}
\author{Maurice de Gosson}
\address{University of Vienna, Faculty of Mathematics, Oskar-Morgenstern-Platz 1 A-1090
Wien, Austria}
\email{maurice.de.gosson@univie.ac.at}
\author{Monika D\"{o}rfler}
\address{University of Vienna, Faculty of Mathematics, Oskar-Morgenstern-Platz 1 A-1090
Wien, Austria}
\email{monika.doerfler@univie.ac.at}
\author{Fabio Nicola}
\address{Dipartimento di Scienze Matematiche, Politecnico di Torino, corso Duca degli
Abruzzi 24, 10129 Torino, Italy}
\email{fabio.nicola@polito.it}
\thanks{}
\thanks{}
\thanks{}
\thanks{}
\subjclass[2010]{Primary 42B10, Secondary 42B37}
\keywords{Time-frequency analysis, Wigner distribution, Born-Jordan distribution,
B-Splines, Interferences, wave-front set, modulation spaces, Fourier Lebesgue spaces}
\date{}
\dedicatory{ }
\begin{abstract}
The quadratic nature of the Wigner distribution causes the appearance of
unwanted interferences. This is the reason why engineers, mathematicians and
physicists look for related time-frequency distributions, many of them are
members of the Cohen class \cite{Cohen2}. Among them, the Born-Jordan
distribution has recently attracted the attention of many authors, since the
so-called \textquotedblleft ghost frequencies \textquotedblright are damped
quite well, and the noise is in general reduced. The very insight relies on
the kernel of such a distribution, which contains the sinus cardinalis
$\mathrm{sinc}$, which can be viewed as the Fourier transform\, of the first
B-Spline $B_{1}$. Replacing the function $B_{1}$ with the spline or order $n$,
denoted by $B_{n}$, on the Fourier side we obtain $(\mathrm{sinc})^{n}$, whose
decay at infinity increases with $n$. We introduce the Cohen's Kernel
$\Theta^{n}(z_{1},z_{2})=\mathrm{sinc}^{n}(z_{1}\cdot z_{2})$ and study the
properties of the related time-frequency \, distribution $Q^{n}$, which we
call generalized Born--Jordan distribution.

\end{abstract}
\maketitle

\section{Introduction}

The study of signals in the time-frequency plane is a subject which involves
many different people: engineers, physicists, mathematicians, both on
theoretical and applied levels, cf.,e.g.,
\cite{Cohen1,Cohen2,Galleani2002,auger}.

One of the most popular time-frequency representation of a signal $f$ is the
Wigner distribution
\begin{equation}
\label{wigner}W f(x,\omega)=\int_{\mathbb{R}^{d}} f\big(x+\frac{y}%
{2}\big) \overline{f\big(x-\frac{y}{2}\big)}e^{-2\pi i y\omega}\, dy,\qquad
x,\omega\in\mathbb{R}^{d},
\end{equation}
where the signal $f$ can be thought as a function in $L^{2}(\mathbb{R}^{d})$
or more generally as a tempered distribution ($f\in\mathcal{S}^{\prime
}(\mathbb{R}^{d})$).

It is well known that the quadratic nature of this representation causes the
appearance of interferences between several components of the signal. To
overcome this issue, the so-called Cohen class was introduced in \cite{Cohen1}
and widely studied by many authors (see \cite{bogetal} and references
therein). We address the interested reader to the textbooks
\cite{Cohen2,auger}.

A member of the Cohen class $Qf$ is obtained by convolving the Wigner
representation $Wf$ with a distribution $\theta\in\mathcal{S}^{\prime
}(\mathbb{R}^{2n})$
\begin{equation}
\label{Cohenkernel}Qf=Wf\ast\theta.
\end{equation}
A possible choice is $\theta=\mathcal{F}_{\sigma}\Theta^{1}$, with
$\mathcal{F}_{\theta}\Theta^{1}$ being the symplectic Fourier transform of the
Cohen kernel
\begin{equation}
\label{sincxp}\Theta^{1}(x,\omega)=\mathrm{sinc}(x\omega)=%
\begin{cases}
\displaystyle\frac{\sin(\pi x\omega)}{\pi x\omega} & \mbox{for}\, x\omega
\neq0\\
1 & \mbox{for}\, x\omega=0
\end{cases}
\end{equation}
($x\omega= x\cdot\omega$ denoting the scalar product in $\mathbb{R}^{d}$). In
this way we obtain the Born-Jordan distribution:
\begin{equation}
\label{bj}Q^{1} f= Wf \ast\mathcal{F}_{\sigma}(\Theta^{1}),\quad
f\in L^{2}(\mathbb{R}^{d}),
\end{equation}
see \cite{bogetal, Cohen1,Cohen2,Cohenbook,cgn0,TRANSAM, golu1,auger} and the
references therein.

We can create other interesting kernels and related distributions using the
B-spline functions $B_{n}$. Recall that the sequence of B-splines
$\{B_{n}\}_{n\in\mathbb{N}_{+}}$, is defined inductively as follows. The first
B-Spline is
\[
B_{1}(t)=\chi_{\left[ -\frac12,\frac12\right] }(t),
\]
whereas, assuming that we have defined $B_{n}$, for some $n\in\mathbb{N}_{+}$,
the spline $B_{n+1}$ is defined by
\begin{equation}
\label{bsplines}B_{n+1}(t)=(B_{n}\ast B_{1})(t)=\int_{\mathbb{R}}%
B_{n}(t-y)B_{1}(y)dy=\int_{-\frac12}^{\frac12} B_{n}(t-y)dy.
\end{equation}
The spline $B_{n}$ is a piecewise polynomial of degree at most $n-1$,
$n\in\mathbb{N}_{+}$, and satisfying $B_{n}\in\mathcal{C}^{n-2}(\mathbb{R})$,
$n\geq2$. For the main properties we refer, e.g., to \cite{Christensen2016}.

Observe that $\mathrm{sinc}(\xi)=\mathcal{F} B_{1}(\xi)$ and by induction we
infer
\begin{equation}
\label{Fouriersplines}\mathrm{sinc}^{n}(\xi)=\mathcal{F} B_{n}(\xi),\quad
n\in\mathbb{N}_{+}.
\end{equation}

\begin{definition}
For $n\in\mathbb{N}$, the nth Born-Jordan kernel is the function on
${\mathbb{R}^{2d}}$ defined by
\begin{equation}
\label{nCohenkerneln}\Theta^{n}(x,\omega)=\mathrm{sinc}^{n}(x\omega
),\quad(x,\omega)\in{\mathbb{R}^{2d}}.
\end{equation}
The Born-Jordan distribution of order $n$ (BJDn) is given by
\begin{equation}
\label{e17}Q^{n} f=Wf\ast\mathcal{F}_{\sigma}(\Theta^{n}),\quad f\in L^{2}%
(\mathbb{R}^{d}).
\end{equation}
The cross-BJDn is given by
\begin{equation}
\label{crossBJ}Q^{n} (f,g)=W(f,g)\ast\mathcal{F}_{\sigma}(\Theta^{n}),\quad
f,g\in L^{2}(\mathbb{R}^{d}).
\end{equation}
We write $Q^{n} (f,f)=Q^{n} f$, for every $f\in L^{2}(\mathbb{R}^{d})$.

For $n=0$, $\Theta^{0}\equiv1$ and $\mathcal{F}_{\sigma}(1)=\delta$, so that
$Q^{0}f=Wf$, the Wigner distribution of the signal $f$.
\end{definition}

Roughly speaking, the class of the BJDn's form a subclass of the Cohen class,
containing the Wigner distribution. This subclass will play an important role
in the applications, since its members display a great capacity of damping
interferences and such reduction increases with $n$.

In this paper we show the different facets of this phenomenon, from visual
comparisons to rigorous mathematical explanations. Motivated by this issue, a
thorough study of such distributions and related pseudodifferential calculus
is performed. In this way we show the many connections and uses of the BJDn's
$Q^{n}$, paving the way to possible other interesting and useful applications.

The main subjects treated in this paper are the following:

\begin{enumerate}

\item[(i)] \emph{Regularity and Smoothness Properties of $Q^{n}$}; 

\item[(ii)] \emph{Damping of interferences in comparison with the Wigner
distribution}; 

\item[(iii)] \emph{Visual comparison in dimension $d=1$ between $Q^{n}$ and
the Wigner Distribution}; 

\item[(iv)] \emph{Born-Jordan quantization of order $n$ and related
pseudodifferential calculus.}
\end{enumerate}

The most suitable framework to handle these aspects can be found in the scale
of modulation spaces (see \cite{F1} and also the textbook \cite{grochenig}),
recalled in Subsection \ref{2.2}. In short, we first introduce another
time-frequency representation: \emph{the short-time Fourier transform (STFT)}.
Fix a Schwartz function $g\in\mathcal{S}(\mathbb{R}^{d})\setminus\{0\}$ (the
so-called \textit{window}). We define the short-time Fourier transform of $f$
as
\begin{equation}
V_{g}f(x,\omega)=\int_{\mathbb{R}^{d}}f(y)\,{\overline{g(y-x)}}\,e^{-2\pi
iy\omega}\,dy,\quad(x,\omega)\in{\mathbb{R}^{2d}}.\label{STFTdef}%
\end{equation}
For $1\leq p,q\leq\infty$, the (unweighted) modulation space $M^{p,q}%
(\mathbb{R}^{d})$ is the subspace of tempered distributions $f$ such that
\[
\Vert f\Vert_{M^{p,q}}:=\left(  \int_{\mathbb{R}^{d}}\left(  \int%
_{\mathbb{R}^{d}}|V_{g}f(x,\omega)|^{p}\,dx\right)  ^{q/p}d\omega\right)
^{1/q}<\infty\,
\]
(with obvious modifications for $p=\infty$ or $q=\infty$). Roughly speaking, a
signal $f$ is in $M^{p,q}(\mathbb{R}^{d})$ if it decays at infinity as a
function in $\ell^{p}(\mathbb{Z}^{d})$ whereas displays a smoothness measured
in the scale $\mathcal{F}L^{q}(\mathbb{R}^{d})$.

Their images under the Fourier transform are the modulation spaces
$W(\mathcal{F} L^{p},L^{q})(\mathbb{R}^{d})$ (also known as Wiener amalgam
spaces, see subsection \ref{2.2} below). Observe that a tempered distribution
$f$ in $W(\mathcal{F} L^{p},L^{q})(\mathbb{R}^{d})$ decays at infinity as a
function in $\ell^{q}(\mathbb{Z}^{d})$ whereas locally behaves as a function
in $\mathcal{F} L^{p}(\mathbb{R}^{d})$. Of particular interest is the space
$W(\mathcal{F} L^{1},L^{\infty})(\mathbb{R}^{d})$, which is an algebra under
pointwise multiplication.

With all these instruments at hand, we can exhibit the results obtained in
this paper.

\noindent\textbf{Regularity of $Q^{n}$.} It is intuitively clear that the
Born-Jordan distribution of order $n$ of a signal, as for the classical one
$Q^{1}$, is certainly not rougher than the corresponding Wigner distribution.
We shall show in Proposition \ref{pron} that the nth-Cohen kernel belongs to
the Wiener amalgam space $W(\mathcal{F} L^{1},L^{\infty})$, for every
$n\in\mathbb{N}_{+}$. This is the key tool for proving the following result
(cf. Theorem \ref{teo2}):

\begin{theorem}
\label{teo2-zero} Let $f\in\mathcal{S}^{\prime}(\mathbb{R}^{d})$ be a signal,
with $Wf\in M^{p,q}({\mathbb{R}^{2d}})$ for some $1\leq p,q\leq\infty$. Then
$Q^{n}f\in M^{p,q}({\mathbb{R}^{2d}})$, for every $n\in\mathbb{N}_{+}$.
\end{theorem}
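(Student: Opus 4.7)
The plan is to read $Q^n f = Wf\ast \mathcal{F}_\sigma(\Theta^n)$ as a convolution and use the (bi)linear inclusion for modulation spaces $M^{p,q}\ast M^{1,\infty}\hookrightarrow M^{p,q}$. So I need to show that $\mathcal{F}_\sigma(\Theta^n)\in M^{1,\infty}(\mathbb{R}^{2d})$, and then the conclusion $Q^n f\in M^{p,q}(\mathbb{R}^{2d})$ follows immediately from the hypothesis $Wf\in M^{p,q}(\mathbb{R}^{2d})$.

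First I would invoke Proposition \ref{pron}, which provides the key regularity statement $\Theta^n\in W(\mathcal{F}L^1,L^\infty)(\mathbb{R}^{2d})$ for every $n\in\mathbb{N}_+$. Combined with the duality (recalled in the introduction) that the Fourier transform intertwines modulation and Wiener amalgam spaces, namely $\mathcal{F}\big(M^{1,\infty}\big)=W(\mathcal{F}L^1,L^\infty)$, this gives $\mathcal{F}^{-1}(\Theta^n)\in M^{1,\infty}(\mathbb{R}^{2d})$. The symplectic Fourier transform $\mathcal{F}_\sigma$ differs from the ordinary $\mathcal{F}$ on $\mathbb{R}^{2d}$ only by a linear (symplectic) change of variables, which preserves the norm on $M^{1,\infty}$; hence $\mathcal{F}_\sigma(\Theta^n)\in M^{1,\infty}(\mathbb{R}^{2d})$.

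Next I would close the argument by the convolution relation for modulation spaces: for indices $1\le p,q\le\infty$ one has
\begin{equation*}
M^{p,q}(\mathbb{R}^{2d})\ast M^{1,\infty}(\mathbb{R}^{2d})\hookrightarrow M^{p,q}(\mathbb{R}^{2d}),
\end{equation*}
which is a special case of the general convolution inequalities for modulation spaces (Young-type, with H\"older on the second index and Young on the first). Applying this with the assumption $Wf\in M^{p,q}$ and the fact just established that $\mathcal{F}_\sigma(\Theta^n)\in M^{1,\infty}$, we get $Q^n f=Wf\ast \mathcal{F}_\sigma(\Theta^n)\in M^{p,q}(\mathbb{R}^{2d})$, as desired.

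The only real obstacle is the regularity of the kernel itself, which is precisely the content of Proposition \ref{pron}: one needs a uniform bound on the $W(\mathcal{F}L^1,L^\infty)$-norm of $\mathrm{sinc}^n(x\omega)$ (or at least finiteness for each fixed $n$), and this requires some care because of the pointwise product $x\cdot\omega$ in the argument. Once that proposition is in hand, the deduction of Theorem \ref{teo2-zero} is essentially a one-line application of the convolution inclusion, and no further $n$-dependent analysis is needed.
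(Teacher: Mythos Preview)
Your argument is correct and is essentially the Fourier-dual of the paper's own proof: the paper takes the symplectic Fourier transform of $Q^n f$ and uses the pointwise multiplier property $W(\mathcal{F}L^1,L^\infty)\cdot W(\mathcal{F}L^p,L^q)\hookrightarrow W(\mathcal{F}L^p,L^q)$ together with Proposition~\ref{pron}, which is exactly your convolution inclusion $M^{p,q}\ast M^{1,\infty}\hookrightarrow M^{p,q}$ read on the other side. The paper in fact uses your formulation verbatim later on (see the proofs of Theorems~\ref{cohenbound} and~\ref{Charpseudo}), so both routes are equivalent and rest on the same key input, namely $\Theta^n\in W(\mathcal{F}L^1,L^\infty)$.
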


\noindent\textbf{Damping of interferences in comparison with the Wigner
distribution.} This topic is strictly connected with the smoothness of $Q^{n}%
$, measured using the Fourier Lebesgue wave-front set. It already paved the
way for showing the smoothness of the standard Born-Jordan distribution
$Q^{1}$ in \cite{ACHA2018}. Here it is proved that it is the right instrument
to measure the smoothness of $Q^{n}$, in comparison with the Wigner
distribution, for every $n\in\mathbb{N}_{+}$.

The notion of wave-front set of a distribution is nowadays a standard
technique in the study of singularities for solutions to partial differential
equations. The basic idea is to detect the location and orientation of the
singularities of a distribution $f$ by looking at which directions the Fourier
transform of $\varphi f$ fails to decay rapidly, where $\varphi$ is a cut-off
function supported in a neighbourhood of any given point $x_{0}$. This test is
performed in the framework of edge detection, where often the Fourier
transform is replaced by other transforms, see e.g.\ \cite{kuty} and the
references therein.

We shall use the Fourier-Lebesgue wave-front set, introduced in
\cite{ptt1,ptt2,ptt3}, and related to the Fourier-Lebesgue spaces $\mathcal{F}
L^{q}_{s}(\mathbb{R}^{d})$, $s\in\mathbb{R}$, $1\leq q\leq\infty$. Recall that
the norm in the space $\mathcal{F} L^{q}_{s}(\mathbb{R}^{d})$, $1\leq
q\leq\infty$, is given by
\begin{equation}
\label{eq4-0}\|f\|_{\mathcal{F} L^{q}_{s}(\mathbb{R}^{d})}=\|\widehat{f}%
(\omega) \langle\omega\rangle^{s}\|_{L^{q}(\mathbb{R}^{d})},
\end{equation}
with $\langle\omega\rangle=(1+|\omega|^{2})^{1/2}$. Inspired by this
definition, given a distribution $f\in\mathcal{S}^{\prime}(\mathbb{R}^{d})$
its wave-front set $WF_{\mathcal{F} L^{q}_{s}} (f)\subset\mathbb{R}^{d}%
\times(\mathbb{R}^{d}\setminus\{0\})$, is the set of points $({x}_{0},{\omega
}_{0})\in\mathbb{R}^{d}\times\mathbb{R}^{d}$, ${\omega}_{0}\not =0$, where the
following condition \textit{is not satisfied}: for some cut-off function
$\varphi$ (i.e., $\varphi$ is smooth and compactly supported on $\mathbb{R}%
^{d}$), with $\varphi({x}_{0})\not =0$, and some open conic neighbourhood
$\Gamma\subset\mathbb{R}^{d}\setminus\{0\}$ of ${\omega}_{0}$ it holds
\begin{equation}
\label{eq4}\|\mathcal{F} [\varphi f](\omega) \langle\omega\rangle^{s}%
\|_{L^{q}(\Gamma)}<\infty.
\end{equation}
Observe that $WF_{\mathcal{F} L^{2}_{s}} (f)=WF_{H^{s}}(f)$ is the standard
$H^{s}$ wave-front set (see \cite[Chapter XIII]{hormander2} and Section 2
below).  Roughly speaking, $({x}_{0},{\omega}_{0})\not \in WF_{\mathcal{F}
L^{q}_{s}}(f)$ means that $f$ has regularity $\mathcal{F} L^{q}_{s}$ at
${x}_{0}$ and in the direction ${\omega}_{0}$. We are interested in the
$\mathcal{F} L^{q}_{s}$ wave-front set of the Born-Jordan distribution of
order $n$ of a given signal $f\in L^{2}(\mathbb{R}^{d})$.

Here is the mathematical explanation of the $Q^{n}$'s smoothing effects: 

\begin{theorem}
\label{mainteo}  Let $f\in\mathcal{S}^{\prime}(\mathbb{R}^{d})$ be a signal,
with $Wf\in M^{\infty,q}({\mathbb{R}^{2d}})$ for some $1\leq q\leq\infty$. Let
$({z},{\zeta})\in{\mathbb{R}^{2d}}\times{\mathbb{R}^{2d}}$, with ${\zeta
}=({\zeta}_{1},{\zeta}_{2})$ satisfying ${\zeta}_{1}\cdot{\zeta}_{2}\not =0$.
Then
\[
({z},{\zeta})\not \in WF_{\mathcal{F} L^{q}_{2n}}(Q^{n}f).
\]

\end{theorem}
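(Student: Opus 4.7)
The plan is to convert the statement into a frequency-side multiplier estimate. First, taking the Fourier transform of $Q^n f = Wf \ast \mathcal{F}_\sigma \Theta^n$ and using that $\mathcal{F}\circ\mathcal{F}_\sigma$ reduces to a linear change of variables on $\mathbb{R}^{2d}$, one obtains
\[
\widehat{Q^n f}(\eta_1,\eta_2) = \widehat{Wf}(\eta_1,\eta_2)\cdot \mathrm{sinc}^n(\eta_1\cdot\eta_2).
\]
Since $\zeta_1\cdot\zeta_2\neq 0$, one can choose open conic neighbourhoods $\Gamma'\Subset\Gamma$ of $\zeta$ in $\mathbb{R}^{2d}\setminus\{0\}$ and a constant $c>0$ with $|\eta_1\cdot\eta_2|\geq c|\eta|^2$ for all $\eta=(\eta_1,\eta_2)\in\Gamma$. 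The pointwise inequality $|\mathrm{sinc}(t)|\leq 1/(\pi|t|)$ then yields the key multiplier estimate
\[
|\mathrm{sinc}^n(\eta_1\cdot\eta_2)|\lesssim \langle\eta\rangle^{-2n},\qquad \eta\in\Gamma,\ |\eta|\geq 1,
\]
which furnishes exactly the gain of $2n$ weighted derivatives we need.

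Next I would fix a cutoff $\varphi\in C_c^\infty(\mathbb{R}^{2d})$ with $\varphi(z)\neq 0$ and verify the wave-front-set condition
\[
\int_{\Gamma'}\bigl|\mathcal{F}(\varphi\cdot Q^n f)(\eta)\bigr|^q\langle\eta\rangle^{2nq}\,d\eta<\infty.
\]
Writing $\mathcal{F}(\varphi Q^n f)=\widehat\varphi\ast\widehat{Q^n f}$, I would split the convolution in the $\mu$-variable by a smooth homogeneous partition $1=\chi(\mu)+(1-\chi(\mu))$, where $\chi$ is supported in $\Gamma$ off the origin and equals $1$ on a cone containing $\Gamma'$. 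On the support of $\chi$, the multiplier bound absorbs the $\langle\eta\rangle^{2n}$ weight, and what remains is essentially $\widehat\varphi\ast(\chi\,\widehat{Wf})$, which lies in $L^q(\Gamma')$ thanks to the hypothesis $\widehat{Wf}\in W(\mathcal{F}L^\infty,L^q)$ (the Fourier-side formulation of $Wf\in M^{\infty,q}$) together with a Young-type convolution estimate for the Schwartz factor $\widehat\varphi$. On the complementary region, for $\eta\in\Gamma'$ and $\mu\in\mathrm{supp}(1-\chi)$ one has the angular separation $|\eta-\mu|\gtrsim |\eta|+|\mu|$, so the Schwartz decay of $\widehat\varphi$ kills every power of $\langle\eta\rangle$, leaving a trivially integrable Schwartz remainder.

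The main obstacle will be the cutoff step: multiplication by the spatial cutoff $\varphi$ becomes convolution by $\widehat\varphi$ on the Fourier side, and one must verify that this convolution preserves the weighted $L^q$ behaviour on a cone. The splitting above reduces this to (i) absorbing the weight by the multiplier bound inside $\Gamma$, and (ii) exploiting the geometry $|\eta-\mu|\gtrsim|\eta|+|\mu|$ outside $\Gamma$ together with the Schwartz decay of $\widehat\varphi$. The quantitative bookkeeping between these two pieces, and in particular identifying the correct norm on $\widehat{Wf}$ that survives the convolution with $\widehat\varphi$, is the delicate point; the Wiener-amalgam characterization of $M^{\infty,q}$ is essential here since it furnishes both the local $\mathcal{F}L^\infty$ control and the global $L^q$ decay needed to close the estimate on $\Gamma'$.
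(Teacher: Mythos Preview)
The paper takes a shorter route that avoids your conic splitting entirely. It observes the global identity $(\zeta_1\cdot\zeta_2)^n\,\mathrm{sinc}^n(\zeta_1\cdot\zeta_2)=\pi^{-n}\sin^n(\pi\zeta_1\cdot\zeta_2)$ and shows, via $e^{\pm\pi i\zeta_1\zeta_2}\in W(\mathcal{F}L^1,L^\infty)$ together with the algebra property, that the right-hand side lies in $W(\mathcal{F}L^1,L^\infty)(\mathbb{R}^{2d})$. The product rule \eqref{product} then yields the \emph{global} statement $(\nabla_x\cdot\nabla_\omega)^n Q^nf\in M^{\infty,q}$ (this is \eqref{eq1} with $p=\infty$), whence $WF_{\mathcal{F}L^q}\bigl((\nabla_x\cdot\nabla_\omega)^nQ^nf\bigr)=\emptyset$. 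The theorem follows at once from the microlocal elliptic regularity result Proposition~\ref{pro3} applied to the $2n$-th order operator $(\nabla_x\cdot\nabla_\omega)^n$, whose characteristic set is exactly $\{\zeta_1\cdot\zeta_2=0\}$. All of your conic cutoff, Peetre, and angular-separation work is thereby delegated to that cited proposition.

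Your direct argument, as written, has a real gap: you manipulate $\widehat{Wf}(\mu)$ pointwise---taking absolute values under the convolution, pushing the weight inside via $\langle\eta\rangle^{2n}\le\langle\eta-\mu\rangle^{2n}\langle\mu\rangle^{2n}$ combined with $\langle\mu\rangle^{2n}|\Theta^n(\mu)|\lesssim 1$ on $\mathrm{supp}\,\chi$, and then appealing to a Young-type bound. But the hypothesis only places $\widehat{Wf}$ in $W(\mathcal{F}L^\infty,L^q)$, whose elements need not be locally integrable functions (the local component $\mathcal{F}L^\infty$ contains, e.g., Dirac measures). Concretely, for the chirp $f(x)=e^{\pi i|x|^2}$ one computes $Wf(x,\omega)=\delta(\omega-x)\in M^{\infty,\infty}$ and $\widehat{Wf}=\delta(\eta_1+\eta_2)$, a Dirac mass on a hyperplane; then the expressions $|\widehat{Wf}(\mu)|$ and the $\mu$-integral splitting are meaningless. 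The remedy is precisely the paper's move: replace the pointwise multiplier bound on $\mathrm{sinc}^n$ by the Banach-algebra membership $\sin^n(\pi\zeta_1\zeta_2)\in W(\mathcal{F}L^1,L^\infty)$, which acts on $W(\mathcal{F}L^\infty,L^q)$ via \eqref{product} without ever evaluating $\widehat{Wf}$ at a point. Your scheme can be repaired along these lines, but doing so amounts to reproving Proposition~\ref{pro3} in this particular case; invoking it directly is both shorter and rigorous.
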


This means that, if the Wigner distribution $Wf$ has local regularity
$\mathcal{F} L^{q}$ and some control at infinity, then $Q^{n}f$ is smoother,
possessing \textbf{$s=2n$ additional derivatives}, at least in the directions
${\zeta}=({\zeta}_{1},{\zeta}_{2})$ satisfying ${\zeta}_{1}\cdot{\zeta}%
_{2}\not =0$. In dimension $d=1$ this condition reduces to ${\zeta}_{1}%
\not =0$ and ${\zeta}_{2}\not =0$. Hence this result explains the smoothing
phenomenon of such distributions, which involves all the directions except
those of the coordinates axes. That is why the interferences of two components
which do not share the same time or frequency localization come out
substantially reduced. Observe that for $n=1$ we recapture the damping
phenomenon of the classical Born-Jordan distribution (cf. \cite[Theorem
1.2]{ACHA2018}).

For signal in $L^{2}(\mathbb{R}^{d})$, the previous result can be rephrased in
terms of the H\"ormander's wave-front set as follows:

\begin{corollary}
\label{cor} Let $f\in L^{2}(\mathbb{R}^{d})$, so that $Wf\in L^{2}%
({\mathbb{R}^{2d}})$. Let $(z,\zeta)$ be as in the statement of Theorem
\ref{mainteo}. Then $({z},{\zeta})\not \in WF_{H^{2n}}(Q^{n}f)$, i.e. $Q^{n}f$
has regularity $H^{2n}$ at $z$ and in the direction $\zeta$.
\end{corollary}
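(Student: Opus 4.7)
The plan is to deduce the corollary as a direct specialization of Theorem \ref{mainteo} to $q=2$, coupled with the standard identification $WF_{\mathcal{F}L^{2}_{s}}(u)=WF_{H^{s}}(u)$ noted just after \eqref{eq4}. Thus the only task is to verify the hypothesis of Theorem \ref{mainteo} with $q=2$, namely $Wf\in M^{\infty,2}(\mathbb{R}^{2d})$, under the sole assumption $f\in L^{2}(\mathbb{R}^{d})$.

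First I would recall Moyal's identity, which gives $\|Wf\|_{L^{2}(\mathbb{R}^{2d})}=\|f\|_{L^{2}(\mathbb{R}^{d})}^{2}$, so that indeed $Wf\in L^{2}(\mathbb{R}^{2d})$ as stated in the corollary. Next, using the well-known identification $L^{2}(\mathbb{R}^{2d})=M^{2,2}(\mathbb{R}^{2d})$ together with the monotonicity of modulation spaces in the integrability indices (which will be recalled in Subsection \ref{2.2}), namely the continuous inclusion
\[
M^{p_{1},q_{1}}(\mathbb{R}^{2d})\hookrightarrow M^{p_{2},q_{2}}(\mathbb{R}^{2d})\qquad \text{whenever } p_{1}\leq p_{2},\ q_{1}\leq q_{2},
\]
applied with $(p_{1},q_{1})=(2,2)$ and $(p_{2},q_{2})=(\infty,2)$, I obtain $Wf\in M^{\infty,2}(\mathbb{R}^{2d})$.

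With this hypothesis verified, Theorem \ref{mainteo} applied with $q=2$ yields
\[
(z,\zeta)\notin WF_{\mathcal{F}L^{2}_{2n}}(Q^{n}f)
\]
for every $(z,\zeta)\in\mathbb{R}^{2d}\times\mathbb{R}^{2d}$ with $\zeta=(\zeta_{1},\zeta_{2})$ satisfying $\zeta_{1}\cdot\zeta_{2}\neq 0$. Invoking the equality $WF_{\mathcal{F}L^{2}_{2n}}(Q^{n}f)=WF_{H^{2n}}(Q^{n}f)$ concludes the proof.

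No essential obstacle arises; the corollary is genuinely a one-line consequence once the modulation-space embedding is in place. The only point deserving care is making the chain of inclusions $L^{2}=M^{2,2}\hookrightarrow M^{\infty,2}$ explicit, so as to fit the slightly awkward hypothesis format ($M^{\infty,q}$) of Theorem \ref{mainteo}; this explains why the corollary is formulated specifically for $f\in L^{2}(\mathbb{R}^{d})$ rather than for the natural class $Wf\in L^{2}$ alone.
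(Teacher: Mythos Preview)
Your proof is correct and follows exactly the same approach as the paper: apply Theorem \ref{mainteo} with $q=2$, verify its hypothesis via Moyal's identity and the inclusion $L^{2}=M^{2,2}\hookrightarrow M^{\infty,2}$, and then identify $WF_{\mathcal{F}L^{2}_{2n}}$ with $WF_{H^{2n}}$. The paper's proof is the one-line version of what you wrote.
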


It seems that the smoothing effects could not occur in the directions ${\zeta
}_{1}\cdot{\zeta}_{2}=0$, as the evidences in the pictures show. From
the mathematical point of view, we provide the explanation below.

\begin{theorem}
\label{teo3} Suppose that for some $1\leq p,q_{1},q_{2}\leq\infty$,
$n\in\mathbb{N}_{+}$ and $C>0$, it occurs
\begin{equation}
\label{test}\|Q^{n}f\|_{M^{p,q_{1}}}\leq C\|W f\|_{M^{p,q_{2}}},
\end{equation}
for every $f\in\mathcal{S}(\mathbb{R}^{d})$. Then $q_{1}\geq q_{2}$.
\end{theorem}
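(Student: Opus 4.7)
The plan is to test the hypothesis \eqref{test} on a one-parameter family of dilated Gaussians and examine the asymptotics as $\lambda\to+\infty$. Fix the standard Gaussian $g(x)=e^{-\pi|x|^{2}}$ and set $f_{\lambda}(x)=\lambda^{d/2}g(\lambda x)$, $\lambda>0$. Denote by $S_{\lambda}\colon\mathbb{R}^{2d}\to\mathbb{R}^{2d}$ the symplectic dilation $S_{\lambda}(x,\omega)=(\lambda x,\omega/\lambda)$, which has $\det S_{\lambda}=1$. A direct substitution in the definition of the Wigner distribution gives $Wf_{\lambda}=Wg\circ S_{\lambda}$. The key algebraic observation is that $\Theta^{n}$ is $S_{\lambda}$-invariant: since $(\lambda x)\cdot(\omega/\lambda)=x\cdot\omega$, one has $\Theta^{n}\circ S_{\lambda}=\Theta^{n}$. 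Combined with the symplectic equivariance $\mathcal{F}_{\sigma}(F\circ S_{\lambda})=(\mathcal{F}_{\sigma}F)\circ S_{\lambda}$ (valid because $S_{\lambda}$ is symplectic with unit determinant) and the convolution-multiplication identity $\mathcal{F}_{\sigma}Q^{n}f=(\mathcal{F}_{\sigma}Wf)\cdot\Theta^{n}$, this yields $Q^{n}f_{\lambda}=(Q^{n}g)\circ S_{\lambda}$. Substituting into \eqref{test}, the hypothesis becomes
\[
\|(Q^{n}g)\circ S_{\lambda}\|_{M^{p,q_{1}}}\leq C\,\|Wg\circ S_{\lambda}\|_{M^{p,q_{2}}},\qquad\lambda>0.
\]

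The next step is to extract matching scaling laws in $\lambda$ for the two sides. For the right-hand side, $Wg\circ S_{\lambda}(x,\omega)=2^{d/2}e^{-2\pi\lambda^{2}|x|^{2}}e^{-2\pi|\omega|^{2}/\lambda^{2}}$ is a tensor-product Gaussian on $\mathbb{R}^{d}\times\mathbb{R}^{d}$; the $M^{p,q}$-norm factorizes on such products, and the explicit formula
\[
\|e^{-\pi\alpha|\cdot|^{2}}\|_{M^{p,q}(\mathbb{R}^{d})}\asymp\alpha^{-d/2+d/(2q)}\ (\alpha\to+\infty),\qquad \|e^{-\pi\alpha|\cdot|^{2}}\|_{M^{p,q}(\mathbb{R}^{d})}\asymp\alpha^{-d/(2p)}\ (\alpha\to 0^{+})
\]
yields $\|Wg\circ S_{\lambda}\|_{M^{p,q_{2}}(\mathbb{R}^{2d})}\lesssim\lambda^{-d+d/q_{2}+d/p}$. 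For the left-hand side, I would produce a matching lower bound by duality, pairing against the Gaussian $G(x,\omega)=e^{-\pi(|x|^{2}+|\omega|^{2})}$ pulled back by $S_{\lambda}$. The change-of-variables identity (using $\det S_{\lambda}=1$) gives $\langle(Q^{n}g)\circ S_{\lambda},G\circ S_{\lambda}\rangle=\langle Q^{n}g,G\rangle$, and Plancherel for $\mathcal{F}_{\sigma}$ rewrites this pairing as $2^{-d/2}\int_{\mathbb{R}^{2d}}e^{-3\pi(|\tau|^{2}+|\mu|^{2})/2}\mathrm{sinc}^{n}(\tau\cdot\mu)\,d\tau\,d\mu$. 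Its strict positivity follows from $\mathrm{sinc}^{n}=\mathcal{F}B_{n}$ and $B_{n}\geq 0$ via a one-dimensional Parseval reduction of the inner integral along the direction of $\tau\neq 0$, where a positive Gaussian is paired with the nonnegative B-spline. Combined with the analogous Gaussian upper bound $\|G\circ S_{\lambda}\|_{M^{p',q_{1}'}}\lesssim\lambda^{-d+d/q_{1}'+d/p'}$, this gives
\[
\|(Q^{n}g)\circ S_{\lambda}\|_{M^{p,q_{1}}}\gtrsim\lambda^{d-d/q_{1}'-d/p'}=\lambda^{-d+d/q_{1}+d/p}.
\]

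Inserting these two estimates into the reduced hypothesis yields $\lambda^{d/q_{1}-d/q_{2}}\lesssim 1$ for all $\lambda>0$; letting $\lambda\to+\infty$ forces $q_{1}\geq q_{2}$, as claimed. The main obstacle is producing the lower bound for the left-hand side: since $Q^{n}g$ is not a tensor product (unlike $Wg$), its modulation-space norm under the anisotropic dilation $S_{\lambda}$ cannot be read off by factorization in the two phase-space variables. The duality trick circumvents this by reducing the whole matter to the single non-vanishing pairing $\langle Q^{n}g,G\rangle\neq 0$, whose positivity becomes transparent through the B-spline representation of $\Theta^{n}$ on the symplectic Fourier side.
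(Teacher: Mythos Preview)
Your proof is correct and shares the same overall strategy as the paper's: test \eqref{test} on dilated Gaussians, compute the two sides asymptotically in $\lambda$, and compare exponents. The difference lies in how the lower bound for $\|Q^{n}f_{\lambda}\|_{M^{p,q_{1}}}$ is obtained. The paper passes to the symplectic Fourier side, uses Lemma~\ref{lemma5.1} to insert a cut-off $\chi(\zeta_{1}\zeta_{2})$, then removes the factor $\Theta^{n}$ by writing $\chi=\chi\,\Theta^{n}\,\Theta^{-n}\tilde{\chi}$ with $\Theta^{-n}\tilde{\chi}\in W(\mathcal{F}L^{1},L^{\infty})$ and invoking the product property \eqref{product}; this reduces matters to the estimate \eqref{e27}, which is imported from \cite{ACHA2018}. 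You instead exploit the structural identity $Q^{n}f_{\lambda}=(Q^{n}g)\circ S_{\lambda}$---a consequence of the $S_{\lambda}$-invariance of $\Theta^{n}$ that the paper does not use---and obtain the lower bound by a duality pairing against the rescaled Gaussian $G\circ S_{\lambda}$, reducing everything to the single nonvanishing constant $\langle Q^{n}g,G\rangle$, whose positivity you read off from $\mathrm{sinc}^{n}=\mathcal{F}B_{n}$ with $B_{n}\geq 0$. Your route is more self-contained (no appeal to Lemma~\ref{lemma5.1} or to the external estimate \eqref{e27} from \cite{ACHA2018}) and makes transparent why the same exponent $\lambda^{-d+d/q+d/p}$ governs both $Wf_{\lambda}$ and $Q^{n}f_{\lambda}$: they are related by the \emph{same} symplectic dilation $S_{\lambda}$. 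The paper's route, on the other hand, stays within the Wiener-amalgam multiplier framework used throughout and pinpoints the region $\zeta_{1}\cdot\zeta_{2}\approx 0$ as the obstruction to smoothing, which is the conceptual message of Theorem~\ref{teo3}. One small remark: your asymptotic $\|\varphi(\mu\cdot)\|_{M^{p,q}}\asymp\mu^{-d/p}$ as $\mu\to 0^{+}$ is not stated in Lemma~\ref{lemma5.2-zero}, but it is contained in the references \cite{toft,CNJFA2008} cited there and is in any case equivalent, via \eqref{wignerdil}, to the two-sided estimate \eqref{eqa3} that the paper itself uses.
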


In other terms, for a general signal, the BJDn is not everywhere smoother than
the Wigner distribution. As expected, the problems arise in the directions
$\zeta=(\zeta_{1},\zeta_{2})$ such that $\zeta_{1}\cdot\zeta_{2}=0$.

\vspace{0.5truecm} \noindent\textbf{Visual Comparison in dimension $d=1$
between $Q^{n}$ and the Wigner Distribution.}
 We now illustrate the effect of using higher order cross-term suppression by means of the generalized 
 BJDn. We display the time-frequency distributions of  both synthetic and real signals.  More precisely, Figure~\ref{fig:GenBJ} shows a comparison of the Wigner transform, the Born-Jordan transform and generalised Born-Jordan transform of the sum of  four rotated Gaussian windows. It is clearly visible, that the amount of cross-term suppression increases by applying higher-order smoothing.\\
  \begin{figure}[htbp]
\begin{center}
\hspace{-0.5cm}\includegraphics[width=1.1\textwidth]{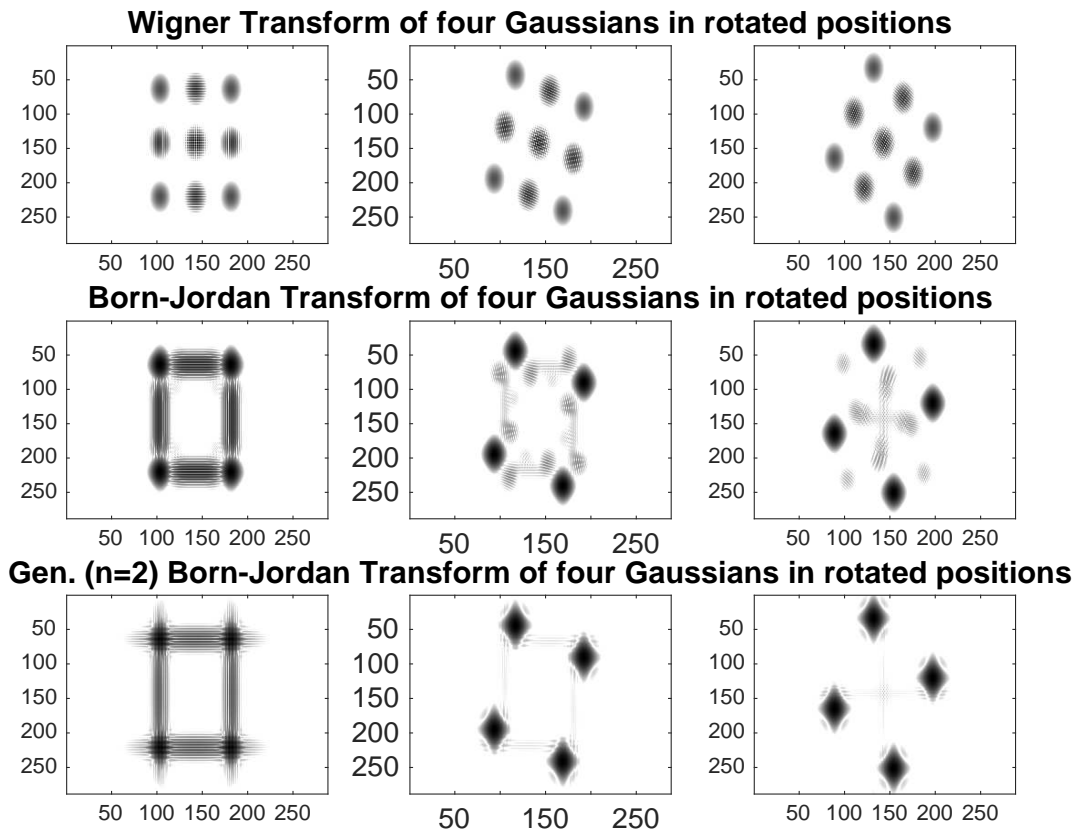}
\caption{Four Gaussian Windows in rotated positions:  Comparison of Wigner distribution, Born-Jordan and generalised Born-Jordan distribution}
\label{fig:GenBJ}
\end{center}
\end{figure}
The second example,  shown in Figure~\ref{fig:GenBJchirp}, depicts  the Wigner transform, the Born-Jordan transform and
 two versions of generalised Born-Jordan transform ($n = 10$ and $n = 100$) of another synthetic signal consisting of two linear chirps. It is notable, that the geometry of this example is different from the previous one in the sense of lacking symmetry around zero. \\
  \begin{figure}[htbp]
\begin{center}
\hspace{-0.5cm}\includegraphics[width=1.1\textwidth]{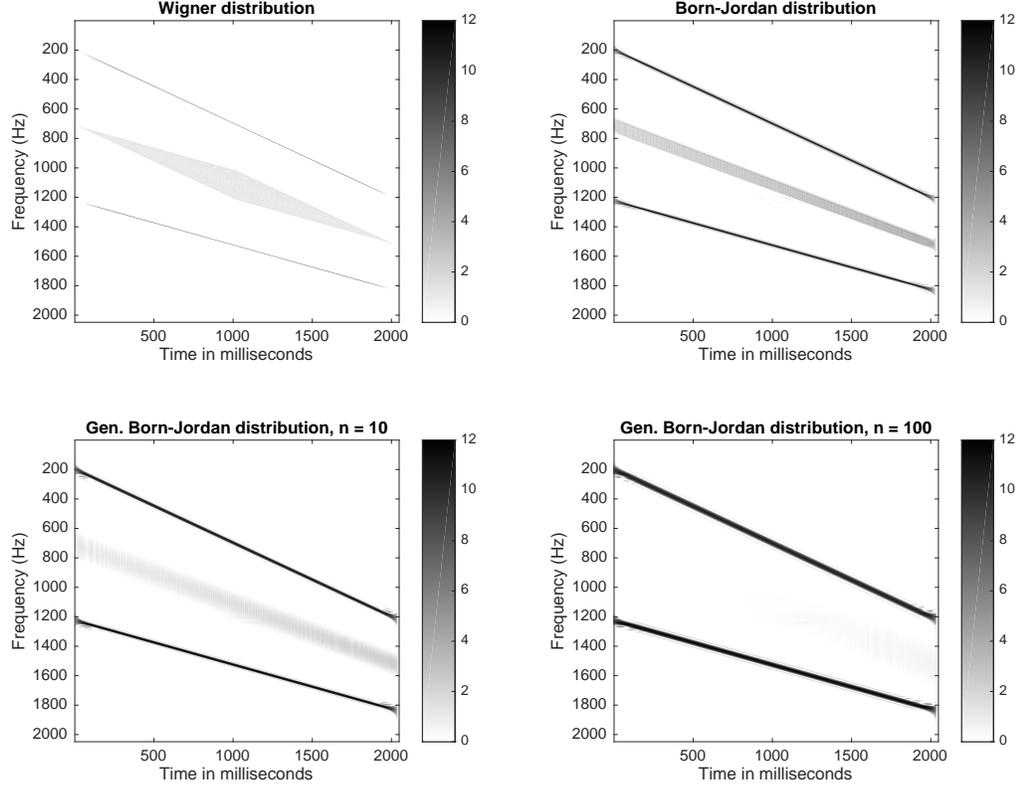}
\caption{Two linear chirps:  Comparison of Wigner distribution, Born-Jordan and generalised Born-Jordan distribution}
\label{fig:GenBJchirp}
\end{center}
\end{figure}
 As a final example, shown in Figure~\ref{fig:GenBJbat}, we applied the Wigner transform, the Born-Jordan transform and
 two versions of generalised Born-Jordan transform to a classical real signal, namely a bat call. As in the first example, the suppression of artefacts increases for exponent $n=2$, while, when applying even higher order smoothing, we observe a loss of concentration in time-frequency. As in the case of the two chirps,  the geometry of this example lacks symmetry around zero. 
 
 \begin{figure}[htbp]
\begin{center}
\hspace{-0.5cm}\includegraphics[width=1.1\textwidth]{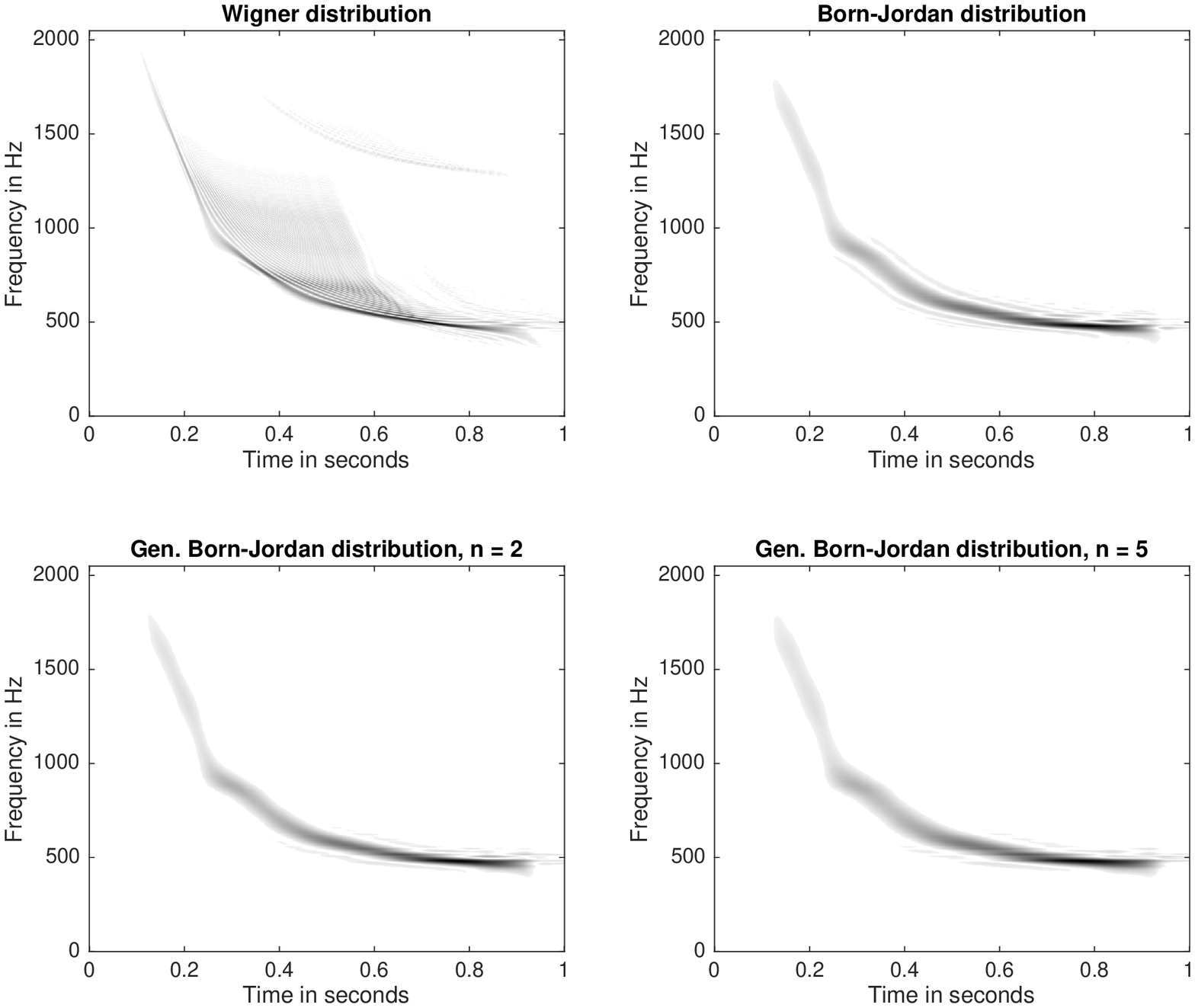}
\caption{Bat call signal: Comparison of Wigner distribution, Born-Jordan and generalised Born-Jordan distribution}
\label{fig:GenBJbat}
\end{center}
\end{figure}

 \noindent\textbf{The Born-Jordan quantization of order $n$.}
This procedure arises as the natural extension of the $n=1$ case (that is, the
classical Born-Jordan quantization). Observe that choosing $n=0$, we obtain
the Weyl quantization. We denote by $\hbar$ the reduced Planck's constant.

\begin{definition}
For $n\in\mathbb{N}$, the Born-Jordan quantization of order $n$ is the
mapping
\begin{equation}
\label{BJNQ}{a\in\mathcal{S}^{\prime}({\mathbb{R}^{2d}})}\mapsto
\widehat{A}_{\mathrm{BJ},n} =\operatorname*{Op}\nolimits_{{BJ},n}(a)=\left(
\tfrac{1}{2\pi\hbar}\right)  ^{d} \int_{\mathbb{R}^{2d}}(\mathcal{F}_{\sigma
}a)(z)\Theta^{n}(z)\widehat{T}(z) dz,
\end{equation}
where $\widehat{T}(z)=e^{-2\pi i\sigma(\widehat{z},z)}$  is the Heisenberg
operator and $\sigma$ the standard symplectic form (see the notation below).

The case $n=0$ ($\Theta^{0}\equiv1$) is the well-known Weyl quantization.
\end{definition}

Though, working in the framework of time-frequency analysis, we shall set
$\hbar=1/(2\pi)$ so that the constant in front of the integrals in
\eqref{BJNQ} disappears. 

\medskip The paper is organized as follows.

\section{Preliminaries}

\subsection{Notation}

We use $x\omega=x\cdot\omega=x_{1}\omega_{1}+\ldots+x_{d}\omega_{d}$ for the
scalar product in $\mathbb{R}^{d}$, $\langle\cdot,\cdot\rangle$ for the inner
product in $L^{2}(\mathbb{R}^{d})$ or for the duality pairing between Schwartz
functions and temperate distributions (antilinear in the second argument).
Given functions $f,g$, we write $f\lesssim g$ if $f(x)\leq C g(x)$ for every
$x$ and some constant $C>0$, and similarly for $\gtrsim$. The notation
$f\asymp g$ means $f\lesssim g$ and $f\gtrsim g$.

We write $\mathcal{C}^{\infty}_{c}(\mathbb{R}^{d})$ for the class of smooth
functions on $\mathbb{R}^{d}$ with compact support.

We denote by $\sigma$ the standard symplectic form on the phase space
$\mathbb{R}^{2d}\equiv\mathbb{R}^{d}\times\mathbb{R}^{d}$; the phase space
variable is denoted $z=(x,\omega)$ and the dual variable by $\zeta=(\zeta
_{1},\zeta_{2})$. By definition $\sigma(z,\zeta)=Jz\cdot\zeta=\omega\cdot
\zeta_{1}-x\cdot\zeta_{2}$, where
\[
J=%
\begin{pmatrix}
0_{d\times d} & I_{d\times d}\\
-I_{d\times d} & 0_{d\times d}%
\end{pmatrix}
.
\]

The Fourier transform of a function $f(x)$ in $\mathbb{R}^{d}$ is
\[
\mathcal{F} f(\omega)=\widehat{f}(\omega)= \int_{\mathbb{R}^{d}} e^{-2\pi i
x\omega} f(x)\, dx,
\]
and the symplectic Fourier transform of a function $F(z)$ in the phase space
$\mathbb{R}^{2d}$ is defined by
\[
\mathcal{F}_{\sigma}F(\zeta)=\int_{{\mathbb{R}^{2d}}} e^{-2\pi{i}\sigma
(\zeta,z)} F(z)\, dz.
\]
The symplectic Fourier transform is an involution, i.e., $\mathcal{F}_{\sigma
}(\mathcal{F}_{\sigma}F)=F$. Moreover, $\mathcal{F}_{\sigma}F(\zeta)=
\mathcal{F} F(J \zeta)$.

Observe that $\Theta^{n}(J(\zeta_{1},\zeta_{2}))=\Theta^{n}(\zeta_{1}%
,\zeta_{2})$ so that
\begin{equation}
\label{cft}\mathcal{F}_{\sigma}(\Theta^{n})=\mathcal{F} (\Theta^{n}%
),\quad\forall n\in\mathbb{N}_{+}.
\end{equation}

For $s\in\mathbb{R}$ the $L^{2}$-based Sobolev space $H^{s}(\mathbb{R}^{d})$
is constituted by the distributions $f\in\mathcal{S}^{\prime}(\mathbb{R}^{d})$
such that
\begin{equation}
\label{normhs}\|f\|_{H^{s}}:=\| \widehat{f}(\omega) \langle\omega\rangle^{s}
\|_{L^{2}}<\infty.
\end{equation}

\subsection{Time-frequency representations and main properties}

\subsubsection{Wigner distribution and ambiguity function
\cite{Birkbis,grochenig}}

We already defined in Introduction, see \eqref{wigner}, the Wigner
distribution $Wf$ of a signal $f\in\mathcal{S}^{\prime}(\mathbb{R}^{d})$. In
general, we have $Wf\in\mathcal{S}^{\prime}({\mathbb{R}^{2d}})$. When $f\in
L^{2}(\mathbb{R}^{d})$ we have $Wf\in L^{2}({\mathbb{R}^{2d}})$ and in fact it
turns out
\begin{equation}
\label{wigner2}\|Wf\|_{L^{2}({\mathbb{R}^{2d}})}=\|f\|_{L^{2}(\mathbb{R}^{d}%
)}^{2}.
\end{equation}
In the sequel we will encounter several times the symplectic Fourier transform
of $W f$, which is known as the (radar) \textit{ambiguity function} $Af$. We
have the formula
\begin{equation}
\label{ambiguity}Af(\zeta_{1},\zeta_{2})=\mathcal{F}_{\sigma}Wf (\zeta
_{1},\zeta_{2})= \int_{\mathbb{R}^{d}} f\big(y+\frac{1}{2}\zeta_{1}%
\big) \overline{f\big(y-\frac{1}{2}\zeta_{1}\big)}e^{-2\pi i\zeta_{2} y}\, dy.
\end{equation}
We refer to \cite[Chapter 9]{Birkbis} and in particular to \cite[Proposition
175]{Birkbis} for more details.

\subsubsection{Marginal properties of $Q^{n}$}

The members of the Cohen class are also called pseudo-density functions since
they are supposed to indicate how the signal density is distributed over time
and frequency. The terminology \emph{pseudo-density} is due to the fact that
such distributions in general are not positive functions and can take not only
negative but even complex values. In order $Q^{n}$ to be a pseudo-density
function, it must satisfy certain requirements. In particular, the marginal
densities
\begin{equation}
\label{pd}\int_{\mathbb{R}^{d}} Q^{n} f(x,\omega) d\omega= |f(t)|^{2}%
,\quad\int_{\mathbb{R}^{d}} Q^{n} f(x,\omega) dx=|\hat{f}(\omega)|^{2},
\end{equation}
for every $f$ in the Schwartz class $\mathcal{S}(\mathbb{R}^{d})$. It can be
shown (see \cite{Janssen82} or \cite[Proposition 97]{deGossonWigner}) that
those conditions are equivalent to the requirements
\begin{equation}
\label{mdkernel}\mathcal{F} (\Theta^{n})(x, 0)=1,\,\,\forall x\in
\mathbb{R}^{d},\quad\mathcal{F} (\Theta^{n})(0, \omega)=1,\,\forall\omega
\in\mathbb{R}^{d}.
\end{equation}
In this case, using \eqref{cft}, \eqref{e17} and \eqref{nCohenkerneln}, one
sees that are trivially satisfied, since sinc$^{n}(0)=1$, for every
$n\in\mathbb{N}$.

\subsubsection{The Moyal identity is not satisfied}

In quantum mechanics (but perhaps not really necessary for signal analysis, as
already stated by Janssen in \cite{Janssen82}) a quite convenient property for
Cohen's kernels \eqref{Cohenkernel} is the validity of Moyal's formula
(\cite[Theorem 14.2 and 27.15]{deBruijn}):
\begin{equation}\label{Moyal}
\la Q(f_1,g_1), Q(f_2,g_2)\ra _{L^2(\rdd)}=\la f_1,f_2\ra_{L^2(\rd)}\overline{\la g_1,g_2\ra}_{L^2(\rd)},\quad f_1,f_2,g_1,g_2\in\lrd.
\end{equation}
The Wigner distribution, as well as the STFT and the ambiguity function
satisfy \eqref{Moyal}. \emph{Though, the BJDn $Q^{n}$, for $n\in\mathbb{N}_{+}$, does not fulfil Moyal's identity}. To prove this statement, we use the
following characterization (cf. \cite[Section 3]{Janssen82} and \cite{gobook16}):
\begin{proposition}
A member of the Cohen class, cf. \eqref{Cohenkernel}, satisfies Moyal's
formula \eqref{Moyal} if and only if
\begin{equation}
\label{Mker}|\theta(x,\omega) |=1, \quad\mbox{for\, all}\quad(x,\omega
)\in{\mathbb{R}^{2d}}.
\end{equation}

\end{proposition}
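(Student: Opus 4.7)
The approach is to pass to the Fourier side, where convolution with the Cohen kernel becomes pointwise multiplication and the Wigner distribution is turned into the ambiguity function. Starting from $Q(f,g) = W(f,g)\ast\mathcal{F}_\sigma\theta$ as in \eqref{Cohenkernel}, the involutivity of $\mathcal{F}_\sigma$ together with the exchange of convolution and product yields
\begin{equation*}
\mathcal{F}_\sigma Q(f,g) = \theta \cdot A(f,g),
\end{equation*}
where $A(f,g)=\mathcal{F}_\sigma W(f,g)$ is the cross-ambiguity function from \eqref{ambiguity}. Since $\mathcal{F}_\sigma$ is unitary on $L^2(\mathbb{R}^{2d})$, Plancherel gives
\begin{equation*}
\langle Q(f_1,g_1),Q(f_2,g_2)\rangle_{L^2(\mathbb{R}^{2d})} = \int_{\mathbb{R}^{2d}} |\theta(z)|^2\, A(f_1,g_1)(z)\,\overline{A(f_2,g_2)(z)}\, dz,
\end{equation*}
for all $f_1,f_2,g_1,g_2\in L^2(\mathbb{R}^d)$.

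The ambiguity function itself obeys Moyal's identity, namely $\langle A(f_1,g_1),A(f_2,g_2)\rangle = \langle f_1,f_2\rangle\overline{\langle g_1,g_2\rangle}$. Hence the $(\Leftarrow)$ direction is immediate: if $|\theta|\equiv 1$, the right-hand side of the Plancherel identity above coincides with the desired expression in \eqref{Moyal}. For the converse, subtract Moyal for $A$ from the same identity to obtain
\begin{equation*}
\int_{\mathbb{R}^{2d}} (|\theta(z)|^2-1)\, A(f_1,g_1)(z)\,\overline{A(f_2,g_2)(z)}\, dz = 0
\end{equation*}
for every $f_1,f_2,g_1,g_2\in L^2(\mathbb{R}^d)$.

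To extract the pointwise identity $|\theta|\equiv 1$ from this orthogonality, I specialize to the standard Gaussian $g_0(x)=2^{d/4}e^{-\pi|x|^2}$. Taking $f_1=f_2=\pi(z_0)g_0$ and $g_1=g_2=g_0$, where $\pi(z_0)$ denotes the time-frequency shift by $z_0\in\mathbb{R}^{2d}$, a direct computation gives $|A(\pi(z_0)g_0,g_0)(z)|=|V_{g_0}g_0(z-z_0)|$, so the identity reads
\begin{equation*}
\big[(|\theta|^2-1)\ast |V_{g_0}g_0|^2\big](z_0)=0,\qquad \forall z_0\in\mathbb{R}^{2d},
\end{equation*}
after using the evenness of $|V_{g_0}g_0|^2$. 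Since $|V_{g_0}g_0|^2$ is a Gaussian on $\mathbb{R}^{2d}$, its Fourier transform is a nowhere vanishing Gaussian, and division on the Fourier side (equivalently, Wiener's Tauberian theorem) forces $|\theta|^2-1=0$ almost everywhere.

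The main obstacle is a technical one in the converse step: one has to make sense of $|\theta|^2$ as an honest function so that convolution with it and Fourier division are justified. Under the mild regularity naturally afforded by the Cohen kernels considered here (in particular, $\theta$ is a measurable function, as is manifestly the case for each $\Theta^n$), this presents no real difficulty and the Gaussian-density argument above goes through verbatim; in full generality one would need $\theta$ at worst to be a function, not a genuine distribution.
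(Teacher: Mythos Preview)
The paper does not prove this proposition; it merely cites \cite{Janssen82} and \cite{gobook16} for the characterization and then applies it. Your argument is the standard one and is essentially correct.

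Two minor remarks. First, there is a notational inconsistency in the paper that you have silently resolved: in \eqref{Cohenkernel} the symbol $\theta$ is the \emph{convolution} kernel, whereas in the proposition (as confirmed by the application immediately following it, where the paper writes $\theta(x,\omega)=\mathrm{sinc}^n(x\omega)$) it must denote the \emph{multiplier}, i.e.\ the symplectic Fourier transform of the convolution kernel. Your formula $Q(f,g)=W(f,g)\ast\mathcal{F}_\sigma\theta$ uses the latter convention, which is the intended one, but writing ``as in \eqref{Cohenkernel}'' is slightly misleading; it would be cleaner to flag the discrepancy explicitly.

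Second, your invocation of Wiener's Tauberian theorem (or Fourier division) in the converse step presupposes $|\theta|^2-1\in L^\infty$, or at least that it is a tempered distribution, which is a stronger requirement than ``$\theta$ is a function'' as in your closing caveat. However, the Moyal hypothesis itself already yields $|\theta|^2 G_{z_0}\in L^1$ with integral equal to $\|g_0\|^4$ for every translated Gaussian $G_{z_0}$; from this one can still conclude $|\theta|^2=1$ a.e.\ without any a priori growth assumption on $\theta$ (for instance, setting $\mu=|\theta|^2 e^{-\pi|\cdot|^2}\in L^1$, the identity becomes equality of the bilateral Laplace transforms of $\mu$ and of $e^{-\pi|\cdot|^2}$ on all of $\mathbb{R}^{2d}$, and analyticity of $\hat\mu$ forces $\mu=e^{-\pi|\cdot|^2}$). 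So the gap is real but easily fillable.
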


Choosing $Q=Q^{n}$, $n\in\mathbb{N}_{+}$, we have $\theta(x,\omega
)=\mathrm{sinc}^{n}(x\omega)$, so that condition \eqref{Mker} is not satisfied
for any $n\in\mathbb{N}_{+}$. Observe that for $n=0$ (the Wigner distribution)
the previous conditions holds, as expected.

\subsection{Modulation spaces
\cite{Birkbis,feichtinger80,feichtinger83,feichtinger90,grochenig}}

\label{2.2} Modulation spaces are used in Time-frequency Analysis to measure
the time-frequency concentration of a signal. As already observed in the
introduction, their construction relies on the notion of short-time (or
windowed) Fourier transform defined in \eqref{STFTdef}.

Let now $s\in\mathbb{R}$, $1\leq p,q\leq\infty$. The \textit{ modulation
space} $M^{p,q}_{s}(\mathbb{R}^{d})$ consists of all tempered distributions
$f\in\mathcal{S}^{\prime}(\mathbb{R}^{d}) $ such that
\begin{equation}
\label{defmod}\|f\|_{M^{p,q}_{s}}:=\left( \int_{\mathbb{R}^{d}} \left(
\int_{\mathbb{R}^{d}}|V_{g}f(x,\omega) |^{p}\langle\omega\rangle^{sp}\,
dx\right) ^{q/p}d\omega\right) ^{1/q}<\infty\,
\end{equation}
(with obvious changes for $p=\infty$ or $q=\infty$).  When $s=0$ we write
$M^{p,q}(\mathbb{R}^{d})$ instead of $M^{p,q}_{0}(\mathbb{R}^{d})$. The
shorthand notation for $M^{p,p}_{s}(\mathbb{R}^{d})$ is $M^{p}_{s}%
(\mathbb{R}^{d})$. The spaces $M^{p,q}_{s}(\mathbb{R}^{d})$ are Banach spaces
for any $1\leq p,q\leq\infty$, and  every non-zero $g\in\mathcal{S}%
(\mathbb{R}^{d})$ yields an equivalent norm in  \eqref{defmod}.

Modulation spaces generalize and include as special cases several function
spaces arising in Harmonic Analysis. In particular for $p=q=2$ we have
\[
M^{2}_{s}(\mathbb{R}^{d})=H^{s}(\mathbb{R}^{d}),
\]
whereas $M^{1}(\mathbb{R}^{d})$ coincides with the Segal algebra
$S_{0}(\mathbb{R}^{d})$ (cf. \cite{fei0}), and $M^{\infty,1}(\mathbb{R}^{d})$
is the so-called Sj\"ostrand class \cite{charly06}.

As already observed in the Introduction, in the notation $M^{p,q}_{s}$ the
exponent $p$ is a measure of decay at infinity (on average) in the scale of
spaces $\ell^{p}$, whereas the exponent $q$ is a measure of smoothness in the
scale $\mathcal{F} L^{q}$. The number $s$ is a further regularity index,
completely analogous to that appearing in the Sobolev spaces $H^{s}%
(\mathbb{R}^{d})$.

Other modulation spaces, also known as \textit{Wiener amalgam spaces}, are
obtained by exchanging the order of integration in \eqref{defmod}. Precisely,
the modulation spaces $W(\mathcal{F}L^{p},L^{q})(\mathbb{R}^{d})$, for
$p,q\in\lbrack1,+\infty)$, is given by the distributions $f\in\mathcal{S}%
^{\prime}(\mathbb{R}^{d})$ such that
\[
\Vert f\Vert_{W(\mathcal{F}L^{p},L^{q})(\mathbb{R}^{d})}:=\left(
\int_{\mathbb{R}^{d}}\left(  \int_{\mathbb{R}^{d}}|V_{g}f(x,\omega
)|^{p}\,d\omega\right)  ^{q/p}dx\right)  ^{1/q}<\infty\,
\]
(with obvious changes for $p=\infty$ or $q=\infty$). Using Parseval identity
in \eqref{STFTdef}, we can write the so-called fundamental identity of
time-frequency analysis\thinspace\
\[
V_{g}f(x,\omega)=e^{-2\pi ix\omega}V_{\hat{g}}\hat{f}(\omega,-x),
\]
hence
\[
|V_{g}f(x,\omega)|=|V_{\hat{g}}\hat{f}(\omega,-x)|=|\mathcal{F}(\hat
{f}\,T_{\omega}\overline{\hat{g}})(-x)|
\]
so that
\[
\Vert f\Vert_{{M}^{p,q}}=\left(  \int_{\mathbb{R}^{d}}\Vert\hat{f}\ T_{\omega
}\overline{\hat{g}}\Vert_{\mathcal{F}L^{p}}^{q}\ d\omega\right)  ^{1/q}%
=\Vert\hat{f}\Vert_{W(\mathcal{F}L^{p},L^{q})}.
\]
This means that such Wiener amalgam spaces can be viewed as the image under
Fourier transform\thinspace\ of modulation spaces: $\mathcal{F}({M}%
^{p,q})=W(\mathcal{F}L^{p},L^{q})$.

We will frequently use the following product property of Wiener amalgam spaces
(\cite[Theorem 1 (v)]{feichtinger80}): For $1\leq p,q\leq\infty$,
\begin{equation}\label{product}
\textit{if $f\in W(\Fur L^1,L^\infty)$ and $g\in W(\Fur L^p,L^q)$ then $fg\in W(\Fur L^p,L^q)$}.
\end{equation}
Taking $p= 1, q=\infty$, we obtain that $W(\mathcal{F} L^{1},L^{\infty
})({\mathbb{R}^{2d}})$ is an algebra under point-wise multiplication.

\begin{proposition}
\label{c1} Let $1\leq p,q\leq\infty$ and $A\in GL(d,\mathbb{R})$. Then, for
every $f\in W(\mathcal{F}L^{p},L^{q})(\mathbb{R}^{d})$,
\begin{equation}
\label{dilAW0}\|f(A\,\cdot)\|_{W(\mathcal{F}L^{p},L^{q})}\leq C |\det
A|^{(1/p-1/q-1)}(\det(I+A^{*} A))^{1/2}\|f\|_{W(\mathcal{F}L^{p},L^{q})}.
\end{equation}
In particular, for $A=\lambda I$, $\lambda>0$,
\begin{equation}
\label{dillambda}\|f(A\,\cdot)\|_{W(\mathcal{F}L^{p},L^{q})}\leq C
\lambda^{d\left( \frac1p-\frac1q-1\right) }(\lambda^{2}+1)^{d/2}
\|f\|_{W(\mathcal{F}L^{p},L^{q})}.
\end{equation}

\end{proposition}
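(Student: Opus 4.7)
The plan is to work directly with the STFT and reduce the dilation estimate to an $L^1$-computation for the STFT of a rescaled Gaussian. Choose $g(y)=e^{-\pi|y|^2}$, set $g_A(y):=g(A^{-1}y)$, and substitute $z=Ay$ in the definition \eqref{STFTdef}; this produces the conjugation identity
\[
V_g\bigl(f(A\,\cdot)\bigr)(x,\omega) \;=\; |\det A|^{-1}\, V_{g_A}f\bigl(Ax,\,A^{-T}\omega\bigr).
\]
Inserting this into the Wiener amalgam norm of $f(A\,\cdot)$ and changing variables $u=Ax$, $\eta=A^{-T}\omega$ in the iterated integrals, the two Jacobians combine with the prefactor $|\det A|^{-1}$ to give
\[
\|f(A\,\cdot)\|_{W(\mathcal{F}L^p,L^q),\,g}\;=\;|\det A|^{1/p-1/q-1}\,\|f\|_{W(\mathcal{F}L^p,L^q),\,g_A},
\]
so the $|\det A|^{1/p-1/q-1}$ factor in \eqref{dilAW0} is already accounted for.

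The next step is to trade the rescaled window $g_A$ on the right for the fixed window $g$. For this I would use the standard pointwise change-of-window inequality (see, e.g., \cite[Lemma 11.3.3]{grochenig})
\[
|V_{g_A}f(x,\omega)| \;\leq\; \|g\|_{L^2}^{-2}\,\bigl(|V_g f|\ast |V_{g_A}g|\bigr)(x,\omega),
\]
combined with Young's inequality on the mixed Lebesgue space underlying $W(\mathcal{F}L^p,L^q)$, yielding
\[
\|f\|_{W(\mathcal{F}L^p,L^q),\,g_A} \;\leq\; C\,\|V_{g_A}g\|_{L^1(\rdd)}\,\|f\|_{W(\mathcal{F}L^p,L^q),\,g}.
\]

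The main technical task -- and the step I expect to be the core obstacle -- is the explicit identity
\[
\|V_{g_A}g\|_{L^1(\rdd)} \;=\; \det(I+A^*A)^{1/2}.
\]
Since both windows are Gaussian, the integrand defining $V_{g_A}g(x,\omega)$ is a Gaussian in $y$ with quadratic form $I+A^{-T}A^{-1}$. Completing the square in $y$ and applying the formula $\mathcal{F}(e^{-\pi y^TMy})(\omega)=\det(M)^{-1/2}e^{-\pi\omega^TM^{-1}\omega}$ expresses $|V_{g_A}g(x,\omega)|$ as the constant $\det(I+A^{-T}A^{-1})^{-1/2}$ times a product of two Gaussians in $x$ and in $\omega$ whose covariance matrices are $\bigl((I+A^{-T}A^{-1})^{-1}A^{-T}A^{-1}\bigr)^{-1}$ and $I+A^{-T}A^{-1}$ respectively. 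Integrating each Gaussian and collecting determinants gives $\|V_{g_A}g\|_{L^1}=\det(I+B^{-1})^{1/2}$ with $B=A^{-T}A^{-1}$, and since $B^{-1}=AA^T$ and $\det(I+AA^T)=\det(I+A^*A)$, the announced identity follows. Combining the three estimates yields \eqref{dilAW0}; specializing to $A=\lambda I$, for which $\det(I+\lambda^2I)=(1+\lambda^2)^d$, produces \eqref{dillambda}.
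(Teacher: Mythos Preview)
The paper does not actually prove Proposition~\ref{c1}; it is stated as a known dilation estimate for Wiener amalgam spaces (the result appears, with essentially the same proof you outline, in \cite{CNJFA2008}). Your argument is correct and is the standard one: the conjugation identity $V_g(f(A\cdot))(x,\omega)=|\det A|^{-1}V_{g_A}f(Ax,A^{-T}\omega)$ together with the change of variables produces the factor $|\det A|^{1/p-1/q-1}$, the change-of-window inequality combined with Young's inequality on the mixed-norm space gives the factor $\|V_{g_A}g\|_{L^1}$, and the explicit Gaussian computation yields $\|V_{g_A}g\|_{L^1}=\det(I+A^*A)^{1/2}$. Your verification of the last identity is accurate; the only cosmetic point is that one passes from $\det(I+AA^T)$ to $\det(I+A^*A)$ via the Sylvester determinant identity $\det(I+XY)=\det(I+YX)$.
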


In the proof of Theorem \ref{teo3} we will use dilation properties of
Gaussians (first proved in \cite[Lemma 1.8]{toft}, see also \cite[Lemma
3.2]{CNJFA2008}):

\begin{lemma}
\label{lemma5.2-zero} Let $\varphi(x)=e^{-\pi|x|^{2}}$ and $\lambda>0$. Then
\[
\|\varphi(\lambda\,\cdot)\|_{M^{p,q}}\asymp\lambda^{-d/q^{\prime}}%
\quad\mathrm{as}\ \lambda\to+\infty.
\]

\end{lemma}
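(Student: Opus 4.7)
The plan is to exploit the fact that, up to equivalence, the $M^{p,q}$ norm \eqref{defmod} is independent of the non-zero Schwartz window, so I may take $g=\varphi$. With this choice, $V_\varphi(\varphi(\lambda\cdot))$ becomes a pure Gaussian integral which can be evaluated in closed form. Writing $f_\lambda(y):=\varphi(\lambda y)=e^{-\pi\lambda^2|y|^2}$, I would complete the square in $y$ in
\[
V_\varphi f_\lambda(x,\omega)=\int_{\mathbb{R}^d} e^{-\pi\lambda^2|y|^2}e^{-\pi|y-x|^2}e^{-2\pi iy\cdot\omega}\,dy
\]
and apply the standard Gaussian integration formula; up to a unimodular phase this yields
\[
|V_\varphi f_\lambda(x,\omega)|=(\lambda^2+1)^{-d/2}\exp\!\left(-\pi\,\frac{\lambda^2|x|^2+|\omega|^2}{\lambda^2+1}\right).
\]

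Next, I would insert this explicit expression into \eqref{defmod} and evaluate the iterated integrals as real Gaussian integrals. Fixing $\omega$, the inner $L^p_x$ norm produces a factor $\exp(-\pi|\omega|^2/(\lambda^2+1))$ times a Gaussian integral in $x$ whose value is $\asymp(\lambda^2+1)^{d/(2p)}\lambda^{-d/p}$; combining with the prefactor $(\lambda^2+1)^{-d/2}$ gives, uniformly in $\lambda$ large,
\[
\|V_\varphi f_\lambda(\cdot,\omega)\|_{L^p_x}\asymp\lambda^{-d}\exp\!\left(-\pi|\omega|^2/(\lambda^2+1)\right).
\]
Taking the $L^q$ norm in $\omega$ contributes a further factor $\asymp(\lambda^2+1)^{d/(2q)}\asymp\lambda^{d/q}$, so that
\[
\|\varphi(\lambda\cdot)\|_{M^{p,q}}\asymp\lambda^{-d}\cdot\lambda^{d/q}=\lambda^{-d/q'},
\]
as desired, with the obvious supremum modifications when $p$ or $q$ equals $\infty$.

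The argument is essentially a bookkeeping exercise, so the only real obstacle is tracking the various $\lambda$-dependent Gaussian prefactors and observing that every ratio of the form $(\lambda^2+1)/\lambda^2$ stays bounded above and below as $\lambda\to+\infty$, so that the $\asymp$-constants can be chosen independently of $\lambda$. Since all the Gaussians involved have real non-negative arguments, Fubini's theorem applies and the exact evaluation of real Gaussian integrals delivers both the upper and the lower asymptotic bound simultaneously; no finer analytic tool (interpolation, duality, complex shift estimates) is required.
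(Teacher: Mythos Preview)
Your computation is correct: the explicit Gaussian formula for $|V_\varphi(\varphi(\lambda\cdot))|$ is right, and the subsequent evaluation of the iterated $L^p_x$ and $L^q_\omega$ norms yields exactly $\lambda^{-d/q'}$ as $\lambda\to+\infty$, with the $p=\infty$ and $q=\infty$ cases covered by the obvious supremum modifications. The paper does not supply its own proof of this lemma; it merely quotes the result from \cite[Lemma~1.8]{toft} and \cite[Lemma~3.2]{CNJFA2008}, where the argument is essentially the same direct Gaussian computation you carried out.
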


\subsection{Wave-front set for Fourier-Lebesgue spaces \cite{hormander2,ptt1}}

The notion of $H^{s}$ wave-front set allows to quantify the regularity of a
function or distribution in the Sobolev scale, at any given point and
direction. This is done by microlocalizing the definition of the $H^{s}$ norm
in \eqref{normhs} as follows (cf.\ \cite[Chapter XIII]{hormander2}).

Given a distribution $f\in\mathcal{S}^{\prime}(\mathbb{R}^{d})$ we define its
wave-front set $WF_{H^{s}} (f)\subset\mathbb{R}^{d}\times(\mathbb{R}%
^{d}\setminus\{0\})$, as the set of points $({x}_{0},{\omega}_{0}%
)\in\mathbb{R}^{d}\times\mathbb{R}^{d}$, ${\omega}_{0}\not =0$, where the
following condition is \textit{not} satisfied: for some cut-off function
$\varphi\in C^{\infty}_{c}(\mathbb{R}^{d})$ with $\varphi({x}_{0})\not =0$ and
some open conic neighborhood of $\Gamma\subset\mathbb{R}^{d}\setminus\{0\}$ of
${\omega}_{0}$ we have
\[
\|\mathcal{F} [\varphi f](\omega) \langle\omega\rangle^{s}\|_{L^{2}(\Gamma
)}<\infty.
\]
More in general one can start from the Fourier-Lebesgue spaces $\mathcal{F}
L^{q}_{s}(\mathbb{R}^{d})$, $s\in\mathbb{R}$, $1\leq q\leq\infty$, which is
the space of distributions $f\in\mathcal{S}^{\prime}(\mathbb{R}^{d})$ such
that the norm in \eqref{eq4-0} is finite. Arguing exactly as above (with the
space $L^{2}$ replaced by $L^{q}$) one then arrives in a natural way to a
corresponding notion of wave-front set $WF_{\mathcal{F} L^{q}_{s}}(f)$ as we
anticipated in Introduction (see \eqref{eq4}).

For our purpose we need to recall some basic results about the action of
constant coefficient linear partial differential operators on such wave-front
set (cf. \cite{ptt1}).

Given the operator
\[
P=\sum_{|\alpha|\leq m} c_{\alpha}\partial^{\alpha},\quad c_{\alpha}%
\in\mathbb{C};
\]
it is straightforward to see that, for $1\leq q\leq\infty$, $s\in\mathbb{R}$,
$f\in\mathcal{S}^{\prime}(\mathbb{R}^{d})$,
\[
WF_{\mathcal{F} L^{q}_{s}}(Pf) \subset WF_{\mathcal{F} L^{q}_{s+m}}(f).
\]
Consider now the inverse inclusion. We say that $\zeta\in\mathbb{R}^{d}$,
$\zeta\not =0$, is non characteristic for the operator $P$ if
\[
\sum_{|\alpha|=m} c_{\alpha}\zeta^{\alpha}\not =0.
\]
This means that $P$ is elliptic in the direction $\zeta$. The following result
is a microlocal version of the classical regularity result of elliptic
operators (see \cite[Corollary 1 (2)]{ptt1}):

\begin{proposition}
\label{pro3} Let $1\leq q\leq\infty$, $s\in\mathbb{R}$ and $f\in
\mathcal{S}^{\prime}(\mathbb{R}^{d})$. Let $z\in\mathbb{R}^{d}$ and suppose
that $\zeta\in\mathbb{R}^{d}\setminus\{0\}$ is non characteristic for $P$.
Then, if $(z,\zeta)\not \in WF_{\mathcal{F} L^{q}_{s}}(Pf)$ we have
$(z,\zeta)\not \in WF_{\mathcal{F} L^{q}_{s+m}}(f)$.
\end{proposition}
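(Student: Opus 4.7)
The plan is to follow the classical elliptic microlocal regularity argument of H\"ormander, adapted to the Fourier--Lebesgue scale. Since $P$ has constant coefficients, on the Fourier side it acts as scalar multiplication by the polynomial symbol $p(\xi)=\sum_{|\alpha|\leq m}c_\alpha(2\pi i\xi)^\alpha$. Non-characteristicity of $\zeta$ means $p_m(\zeta)\neq 0$, so by continuity and homogeneity of the principal part there exist a conic neighborhood $\Gamma''\ni\zeta$ and $R>0$ such that $|p(\xi)|\geq c|\xi|^m$ for $\xi\in\Gamma''$, $|\xi|\geq R$.

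From the hypothesis we have $\varphi\in C^\infty_c(\mathbb{R}^d)$ with $\varphi(z)\neq 0$ and a conic neighborhood $\Gamma$ of $\zeta$ such that $\|\mathcal{F}[\varphi Pf]\langle\cdot\rangle^s\|_{L^q(\Gamma)}<\infty$. I would choose $\psi\in C^\infty_c$ with $\psi(z)\neq 0$ and $\psi\varphi=\psi$, together with a conic neighborhood $\Gamma'$ of $\zeta$ satisfying $\overline{\Gamma'}\setminus\{0\}\subset\Gamma\cap\Gamma''$. The key algebraic identity
\[
p(\xi)\,\widehat{\psi f}(\xi)=\widehat{P(\psi f)}(\xi)=\widehat{\psi Pf}(\xi)+\widehat{[P,\psi]f}(\xi),
\]
in which $[P,\psi]=\sum_{|\beta|\leq m-1}b_\beta\partial^\beta$ is a differential operator of order $\leq m-1$ with $C^\infty_c$ coefficients (linear combinations of derivatives of $\psi$), allows division by $p(\xi)$ on $\Gamma'\cap\{|\xi|\geq R\}$ and yields the pointwise bound
\[
\langle\xi\rangle^{s+m}|\widehat{\psi f}(\xi)|\lesssim\langle\xi\rangle^{s}\bigl(|\widehat{\psi Pf}(\xi)|+|\widehat{[P,\psi]f}(\xi)|\bigr).
\]

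For the first summand, $\widehat{\psi Pf}=\hat\psi\ast\widehat{\varphi Pf}$, and the Schwartz decay of $\hat\psi$ together with the cone-geometric estimate $|\xi-\eta|\gtrsim\langle\xi\rangle+\langle\eta\rangle$ for $\xi\in\Gamma'$, $\eta\notin\Gamma$, gives the standard pseudolocal bound $\|\widehat{\psi Pf}\langle\cdot\rangle^s\|_{L^q(\Gamma')}<\infty$. The commutator term is then handled by bootstrap on the microlocal regularity of $f$: each summand $b_\beta\partial^\beta f$ with $|\beta|\leq m-1$ inherits $\mathcal{F}L^q_{\sigma-(m-1)}$ regularity at $(z,\zeta)$ whenever $f$ has $\mathcal{F}L^q_\sigma$ regularity there. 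Since $f\in\mathcal{S}'(\mathbb{R}^d)$ provides a trivial starting point $(z,\zeta)\notin WF_{\mathcal{F}L^q_{-N}}(f)$ for some large $N$ (by polynomial growth of $\hat f$), iterating the estimate finitely many times gains one derivative per step and reaches $(z,\zeta)\notin WF_{\mathcal{F}L^q_{s+m}}(f)$.

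The main obstacle I anticipate is the bookkeeping during the iteration: at each step the spatial cutoff and the frequency cone must shrink slightly to accommodate both the convolution estimate and the commutator bootstrap, so one has to verify that after finitely many shrinkings the final cutoff still satisfies $\psi(z)\neq 0$ and $\Gamma'$ remains a genuine conic neighborhood of $\zeta$. A conceptually cleaner alternative would be to construct a microlocal parametrix $E$ with symbol $\chi(x,\xi)/p(\xi)$, with $\chi$ localizing near $(z,\zeta)$ inside the elliptic region, so that $EP=\mathrm{Op}(\chi)+\text{smoothing near }(z,\zeta)$ and the conclusion follows directly from $\psi f=\psi E P f+\text{smooth}$; this bypasses the iteration at the cost of invoking additional pseudodifferential calculus on Fourier--Lebesgue spaces.
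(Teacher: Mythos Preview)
The paper does not supply its own proof of this proposition; it simply quotes the result from Pilipovi\'c--Teofanov--Toft \cite[Corollary 1 (2)]{ptt1} as a known input. Your sketch is the standard H\"ormander-style microlocal elliptic regularity argument transported to the $\mathcal{F}L^q_s$ scale, and it is correct in outline: the commutator identity, the conic ellipticity bound $|p(\xi)|\gtrsim|\xi|^m$, the Schwartz-kernel localization estimate for $\widehat{\psi Pf}$ on nested cones, and the one-derivative-per-step bootstrap starting from the trivial $\mathcal{F}L^q_{-N}$ regularity of a compactly supported distribution are exactly the ingredients used in \cite{ptt1}. Your concern about the finite tower of shrinking cutoffs and cones is legitimate but routine: one fixes in advance a finite nested sequence $\Gamma=\Gamma_0\supset\Gamma_1\supset\cdots\supset\Gamma_K\ni\zeta$ and $\varphi=\varphi_0,\varphi_1,\ldots,\varphi_K$ with $\varphi_j\varphi_{j+1}=\varphi_{j+1}$ and $\varphi_K(z)\neq 0$, where $K$ is the number of iterations needed to climb from $-N$ to $s+m$; since $K$ is finite this is harmless. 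The parametrix alternative you mention is indeed how the cited reference packages the argument, but it is equivalent.
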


\section{Generalized Born--Jordan Kernels for Monomials}

\label{3}  Let $\mathbb{C}[x,\omega]$ be the commutative ring of polynomials
generated by  $x$ and $\omega$; it consists of all finite sums $a(x,\omega
)=\sum\lambda_{m\ell}a_{m\ell}(x,\omega)$ ($\lambda_{m\ell}\in\mathbb{C}$)
where  $a_{m\ell}(x,\omega)=\omega^{m}x^{\ell}$ with $(m,\ell)\in
\mathbb{N}^{2}$. We  identify $\mathbb{C}[x,\omega]$ with the ring of
polynomial functions in the  variables $(x,\omega)\in\mathbb{R}^{2}$. We
denote by $\mathbb{C} [\widehat{x},\widehat{\omega}]$ the corresponding Weyl
algebra; it is realized  as the non-commutative unital algebra generated by
the two operators  $\widehat{x}$ and $\widehat{\omega}$ satisfying
$[\widehat{x},\widehat{\omega}]=(i/2\pi)I_{\mathrm{d}}$. These operators are
realized as the unbounded  operators defined on $L^{2}(\mathbb{R)}$ by
$\widehat{x}f=xf$ and  $\widehat{\omega}f=-(i/2\pi)\partial_{x}f$. We will
call \emph{quantization of} $\mathbb{C}[x,\omega]$ any continuous linear
mapping $\operatorname*{Op}:$  $\mathbb{C}[x,\omega]\longrightarrow
\mathbb{C}[\widehat{x},\widehat{\omega}]$  having the following properties:

\begin{enumerate}

\item[(Q1)] Triviality: $\operatorname*{Op}(1)=I_{\mathrm{d}}$,
$\operatorname*{Op}(x)=\widehat{x}$, and $\operatorname*{Op}(\omega
)=\widehat{\omega}$;

\item[(Q2)] Dirac's restricted rule:
\[
\lbrack x,\operatorname*{Op}(a_{m\ell})]=(i/2\pi)\operatorname*{Op}%
(\{x,a_{m\ell}\})\text{ \ , \ }[\omega,\operatorname*{Op}(a_{m\ell}%
)]=(i/2\pi)\operatorname*{Op}(\{\omega,a_{m\ell}\});
\]

\item[(Q3)] Self-adjointness: If $a\in\mathbb{C}[x,\omega]$ then
$\operatorname*{Op}(a)$ is self-adjoint on its domain. 
\end{enumerate}

One shows \cite{dogo15} (also see \cite{Cohenbook}) that for every
quantization of  $\mathbb{C}[x,\omega]$ there exists \cite{Cohenbook,dogo15} a
function $f$ with  $f(0)=1$ and $e^{-it/2}f$ real such that
\begin{equation}
\operatorname*{Op}(a_{m\ell})=\sum_{j=0}^{\min(m,\ell)}j!\binom{m}{j}
\binom{\ell}{j}f^{(j)}(0)(2\pi)^{-j}\widehat{\omega}^{m-j}\widehat{x}^{\ell
-j}. \label{opaml}%
\end{equation}
Let $(a_{m\ell})_{\sigma}=\mathcal{F}_{\sigma}a_{m\ell}$ be the symplectic
Fourier  transform of $a_{m\ell}$ and $\widehat{T}(z)=e^{-2\pi i\sigma
(\widehat{z},z)}$  the Heisenberg operator.

\begin{proposition}
Let $\operatorname*{Op}:\mathcal{S}^{\prime}(\mathbb{R}^{2n})\longrightarrow
\mathcal{L}(\mathcal{S}(\mathbb{R}^{n}),\mathcal{S}^{\prime}(\mathbb{R}%
^{n}))$  be a quantization having the properties (Q1), (Q2), (Q3). (i) The
restriction  of $\operatorname*{Op}$ to $\mathbb{C}[x,\omega]$ is then given
by
\begin{equation}
\operatorname*{Op}(a_{m\ell})=\int(a_{m\ell})_{\sigma}(x,\omega)\Phi
(2\pi\omega x)\widehat{T}(x,\omega)d\omega dx \label{cohen1}%
\end{equation}
where $\Phi(t)=e^{-it/2}f(t)$. (ii) The Cohen kernel $\theta$ of
$\operatorname*{Op}$ thus has symplectic Fourier transform $\mathcal{F}%
_{\sigma}\theta$  given by
\begin{equation}
\mathcal{F}_{\sigma}\theta(x,\omega)=\Phi(2\pi\omega x). \label{thesig}%
\end{equation}

\end{proposition}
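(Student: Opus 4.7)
The plan is to verify (i) by a direct computation matching the integral on the right-hand side of \eqref{cohen1} to the known formula \eqref{opaml}; assertion (ii) is then immediate, since comparing \eqref{cohen1} with the general Cohen-class form (as in \eqref{BJNQ}) reads off the kernel via $\mathcal{F}_\sigma\theta(x,\omega)=\Phi(2\pi\omega x)$.

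First, I would Taylor expand $\Phi$ about the origin,
\begin{equation*}
\Phi(2\pi\omega x)=\sum_{k\ge 0}\frac{\Phi^{(k)}(0)}{k!}(2\pi\omega x)^k,
\end{equation*}
and insert this into \eqref{cohen1}. Next, I would transfer each factor $(\omega x)^k$ from the $\mathcal{F}_\sigma$ side back onto the symbol, using the elementary symplectic-Fourier identities $\omega\,\mathcal{F}_\sigma F=\mathcal{F}_\sigma[(2\pi i)^{-1}\partial_x F]$ and $x\,\mathcal{F}_\sigma F=-\mathcal{F}_\sigma[(2\pi i)^{-1}\partial_\omega F]$. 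Iterated, these yield $(\omega x)^k\mathcal{F}_\sigma F=\mathcal{F}_\sigma[(2\pi)^{-2k}\partial_x^k\partial_\omega^k F]$. Applied to $a_{m\ell}=\omega^m x^\ell$ the derivative kills all terms with $k>\min(m,\ell)$, so the series truncates and the integral collapses to a finite sum of Weyl quantizations of lower monomials:
\begin{equation*}
\int(a_{m\ell})_\sigma\,\Phi(2\pi\omega x)\,\widehat T(x,\omega)\,dx\,d\omega=\sum_{k=0}^{\min(m,\ell)}k!\binom{m}{k}\binom{\ell}{k}(2\pi)^{-k}\Phi^{(k)}(0)\operatorname*{Op}\nolimits_W(\omega^{m-k}x^{\ell-k}).
\end{equation*}

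Then I would convert each $\operatorname*{Op}\nolimits_W(\omega^{M}x^{L})$ into the normal-ordered form appearing in \eqref{opaml}. Starting from $[\widehat x,\widehat\omega]=(i/2\pi)I$ and the Weyl-calculus rule $\operatorname*{Op}\nolimits_W(e^{a\omega+bx})=e^{a\widehat\omega+b\widehat x}$, the Baker–Campbell–Hausdorff identity gives $e^{a\widehat\omega+b\widehat x}=e^{iab/(4\pi)}e^{a\widehat\omega}e^{b\widehat x}$, and matching Taylor coefficients yields
\begin{equation*}
\operatorname*{Op}\nolimits_W(\omega^{M}x^{L})=\sum_{s=0}^{\min(M,L)}s!\binom{M}{s}\binom{L}{s}\Bigl(\frac{i}{4\pi}\Bigr)^{s}\widehat\omega^{M-s}\widehat x^{L-s}.
\end{equation*}
Substituting this, reindexing $j=k+s$, and using $\binom{m}{k}\binom{m-k}{j-k}=\binom{m}{j}\binom{j}{k}$, the coefficient of $\widehat\omega^{m-j}\widehat x^{\ell-j}$ reduces to $j!(2\pi)^{-j}\binom{m}{j}\binom{\ell}{j}\sum_{k=0}^{j}\binom{j}{k}(i/2)^{j-k}\Phi^{(k)}(0)$.

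The inner sum is exactly the Leibniz expansion of $f^{(j)}(0)$ induced by the defining relation $f(t)=e^{it/2}\Phi(t)$, so this coefficient equals $j!(2\pi)^{-j}\binom{m}{j}\binom{\ell}{j}f^{(j)}(0)$, matching \eqref{opaml} term by term. The main obstacle is the bookkeeping in this last step: one must keep precise track of how the BCH phase $e^{iab/(4\pi)}$ produced by the Weyl-to-normal conversion interacts with the oscillation $e^{-it/2}$ encoded in $\Phi$, so that the apparently messy double sum reassembles into a single Leibniz expansion for $f^{(j)}(0)$.
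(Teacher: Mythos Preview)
Your argument is correct and complete: the Taylor expansion of $\Phi$, the transfer of $(\omega x)^k$ onto the symbol via the symplectic Fourier identities, the BCH-based Weyl-to-normal conversion, and the final Leibniz recombination into $f^{(j)}(0)$ all check out with the right constants.

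The paper's own proof is essentially a one-line sketch: it cites Domingo--Galapon for the full argument and hints that one should use $\mathcal{F}(\omega^m\otimes x^\ell)=(i/2\pi)^{m+\ell}\delta^{(m)}(\omega)\otimes\delta^{(\ell)}(x)$, i.e.\ compute $(a_{m\ell})_\sigma$ directly as a tensor of delta-derivatives, pair this against $\Phi(2\pi\omega x)\widehat T(x,\omega)$, and read off the result as derivatives of that product at the origin. Your route is the dual of this: rather than turning the monomial into delta-derivatives acting on $\Phi\widehat T$, you Taylor-expand $\Phi$, push each power $(\omega x)^k$ back onto $a_{m\ell}$ by integration by parts, and land on a finite sum of Weyl quantizations of lower monomials, which you then normal-order via BCH. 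The two computations are equivalent reorganizations of the same Leibniz bookkeeping; yours is more explicit and self-contained (in particular, your intermediate identity $\operatorname*{Op}_W(\omega^M x^L)=\sum_s s!\binom{M}{s}\binom{L}{s}(i/4\pi)^s\widehat\omega^{M-s}\widehat x^{L-s}$ is exactly the $n=0$ instance of \eqref{opaml} the paper records just after the proposition), while the paper's version is terser but relies on the external reference.
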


\begin{proof}
A detailed proof is given in by Domingo and Galapon \cite{dogo15} (formulas
(10) and (14)). Notice that formula (\ref{cohen1}) readily follows from
(\ref{opaml}) using the elementary formula
\[
\mathcal{F}(\omega^{m}\otimes x^{\ell})=(i/2\pi)^{m+\ell}\delta^{(m)}%
(\omega)\otimes\delta^{(\ell)}(x).
\]
Formula (\ref{thesig}) follows since (\ref{cohen1}) is the Weyl representation
of the operator with twisted symbol $(a_{m\ell})_{\sigma}\Phi$ [the twisted
symbol is the symplectic Fourier transform of the usual symbol].
\end{proof}

\begin{remark}
This result shows that if one limits oneself to pseudo-differential calculi
satisfying the Dirac conditions (Q2) then the Cohen kernel is of a very
particular type: its Fourier transform only depends on the product $\omega
x$.  In particular, the associated quasidistribution $Q\psi=W\psi\ast\theta$
satisfies the marginal conditions since $\mathcal{F}_{\sigma}\theta
(0)=\Phi(0)=1$ (see  \cite{gobook16}, formula (7.29), p. 107). 
\end{remark}

We now focus on the case the symplectic Fourier transform of the Cohen kernel
is given by
\[
\mathcal{F}_{\sigma}\theta(x,\omega)=\operatorname{sinc}^{n}(\pi\omega
x)\text{ , } n\in\mathbb{N}=\{0,1,2,...\}.
\]
With the notation above we thus have $\Phi(\pi\omega x)=\operatorname{sinc}
^{n}(\pi\omega x)$ so that $\Phi(t)=\operatorname{sinc}^{n}(t/2)$ and hence
$f(t)=e^{it/2}\operatorname{sinc}^{n}(t/2)$. Suppose first $n=0$; then
$f^{(j)}(0)=(i/2)^{j}$ hence formula (\ref{opaml}) yields
\[
\operatorname*{Op}(a_{m\ell})=\sum_{j=0}^{\min(m,\ell)}\binom{m}{j}\binom
{\ell}{j}j!\left(  \frac{i}{4\pi}\right)  ^{j}\widehat{\omega}^{m-j}
\widehat{x}^{\ell-j}
\]
so that $\operatorname*{Op}(a_{m\ell})=\operatorname*{Op}^{\mathrm{W}
}(a_{m\ell})=\operatorname*{Op}_{{BJ,0}}(a_{m\ell})$ (see \eqref{BJNQ}) is
just the Weyl ordering of the monomial $a_{m\ell}$  (\cite{dogo15} and
\cite{gobook16}, p.34). Suppose next $n=1$. Then  $f^{(j)}(0)=i^{j}/(j+1)$
and
\[
\operatorname*{Op}(a_{m\ell})=\sum_{j=0}^{\min(m,\ell)}\binom{m}{j}\binom
{\ell}{j}\frac{j!}{j+1}\left(  \frac{i}{2\pi}\right)  ^{j}\widehat{\omega
}^{m-j}\widehat{x}^{\ell-j};
\]
here $\operatorname*{Op}(a_{m\ell})=\operatorname*{Op}_{{BJ,1}}(a_{m\ell})$ is
the Born-Jordan ordering (\cite{dogo15} and \cite[page 34]{gobook16}).

In the case of a general $n$ we have, by Leibniz's formula,
\begin{equation}
f^{(j)}(0)=\sum_{k=0}^{j}\binom{j}{k}\left(  \frac{i}{2}\right)  ^{j-k}\left(
\frac{1}{2}\right)  ^{k}\left(  \frac{d^{k}}{dt^{k}}\operatorname{sinc}
^{n}\right)  (0). \label{fjo}%
\end{equation}
The derivatives of $\operatorname{sinc}^{n}$ at $t=0$ can be calculated using
Fa\`{a} di Bruno's formula \cite{faa} for the derivatives of the composition
of two functions
\begin{equation}
(g\circ h)^{(k)}(t)=\sum_{\kappa\cdot\alpha=k}\binom{k}{\alpha}g^{(|\alpha
|)}(h(t))\Pi_{\alpha}(t) \label{faa1}%
\end{equation}
where $\kappa=(1,2,...,k)$, $\alpha=(\alpha_{1},\alpha_{2},...,\alpha_{k}
)\in\mathbb{N}^{k}$ and
\[
\Pi_{\alpha}(t)=\left(  \frac{1}{1!}h^{(1)}(t)\right)  ^{\alpha_{1}}\left(
\frac{1}{2!}h^{(2)}(t)\right)  ^{\alpha_{2}}\cdot\cdot\cdot\left(  \frac
{1}{k!}h^{(k)}(t)\right)  ^{\alpha_{k}}.
\]
Choosing $g(t)=x^{n}$ and $h(t)=\operatorname{sinc}(t/2)$ this formula yields
\[
\frac{d^{k}}{dt^{k}}\operatorname{sinc}^{n}(0)=\sum_{\substack{\kappa
\cdot\alpha=k\\|\alpha|\leq n}}\binom{k}{\alpha}\binom{n}{|\alpha|}
|\alpha|!\Pi_{\alpha}(0);
\]
since $\operatorname{sinc}^{(2m+1)}(0)=0$ and $\operatorname{sinc}
^{(2m)}(0)=(-1)^{m}/(2m+1)$ we have
\[
\Pi_{\alpha}(0)=\frac{1}{1!(\alpha_{1}+1)^{\alpha_{1}}2!(\alpha_{2}
+1)^{\alpha_{2}}\cdot\cdot\cdot k!(\alpha_{k}+1)^{\alpha_{k}}}.
\]

\section{Time-frequency Analysis of the nth Born-Jordan kernel}

The Born-Jordan kernel $\Theta(\zeta)$ in \eqref{sincxp} belongs to the space
$W(\mathcal{F} L^{1}, L^{\infty})({\mathbb{R}^{2d}})$, as proved in
\cite{ACHA2018}:

\begin{proposition}
\label{pro2}  The function $\Theta^{1}$ in \eqref{sincxp} belongs to
$W(\mathcal{F} L^{1},L^{\infty})({\mathbb{R}^{2d}})$.
\end{proposition}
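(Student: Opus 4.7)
The plan is to exploit the Fourier integral representation of $\mathrm{sinc}$: since $B_{1}=\chi_{[-1/2,1/2]}$, we have $\mathrm{sinc}(\xi) = \mathcal{F}B_{1}(\xi) = \int_{-1/2}^{1/2} e^{-2\pi i t\xi}\,dt$, and hence $\Theta^{1}$ is a continuous superposition of chirps:
\[
\Theta^{1}(x,\omega) = \int_{-1/2}^{1/2} F_{t}(x,\omega)\,dt, \qquad F_{t}(x,\omega) := e^{-2\pi i t\,x\cdot\omega}.
\]
Because $W(\mathcal{F}L^{1},L^{\infty})$ is a Banach space, Minkowski's integral inequality reduces the claim to the uniform bound $\|F_{t}\|_{W(\mathcal{F}L^{1},L^{\infty})}\le C$ for $t\in[-1/2,1/2]$.

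Fix a Schwartz window $\Phi\in\mathcal{S}(\mathbb{R}^{2d})$ in the STFT description
\[
\|F_{t}\|_{W(\mathcal{F}L^{1},L^{\infty})} = \sup_{z_{0}\in{\mathbb{R}^{2d}}} \bigl\|\mathcal{F}\bigl[F_{t}\,\Phi(\cdot-z_{0})\bigr]\bigr\|_{L^{1}}.
\]
Writing $z_{0}=(x_{0},\omega_{0})$, substituting $z=z_{0}+w$ with $w=(y,\eta)$, and expanding the quadratic exponent produces the key factorization
\[
F_{t}(z_{0}+w)\Phi(w) = e^{-2\pi i t\,x_{0}\omega_{0}}\,e^{-2\pi i t\,x_{0}\eta}\,e^{-2\pi i t\,y\omega_{0}}\,\bigl[\Phi(w)\,e^{-2\pi i t\,y\cdot\eta}\bigr].
\]
The first factor is a unit-modulus constant, the two linear modulations in $w$ translate the Fourier transform, and the shift $z\mapsto z_{0}+w$ only modulates the Fourier transform by a further character. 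All of these operations preserve $L^{1}$-norms on the Fourier side, so
\[
\bigl\|\mathcal{F}\bigl[F_{t}\,\Phi(\cdot-z_{0})\bigr]\bigr\|_{L^{1}} = \bigl\|\mathcal{F}\bigl[\Phi(w)\,e^{-2\pi i t\,y\cdot\eta}\bigr]\bigr\|_{L^{1}},
\]
which is genuinely independent of $z_{0}$.

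To conclude, I would observe that for $t$ in the compact interval $[-1/2,1/2]$ the function $w\mapsto\Phi(w)\,e^{-2\pi i t\,y\cdot\eta}$ stays bounded in every seminorm of $\mathcal{S}(\mathbb{R}^{2d})$: the rapid decay comes from $\Phi$, while the derivatives of $e^{-2\pi i t\,y\cdot\eta}$ grow only polynomially with bounds controlled uniformly in $|t|\le 1/2$. Hence its Fourier transform is Schwartz with seminorms uniform in $t$, and in particular $\|\mathcal{F}[\Phi(w)\,e^{-2\pi i t\,y\cdot\eta}]\|_{L^{1}}\le C$. Integrating against $dt$ yields $\Theta^{1}\in W(\mathcal{F}L^{1},L^{\infty})({\mathbb{R}^{2d}})$.

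No step is truly hard; the one requiring the most care is the algebraic identity that decouples the $z_{0}$-dependence into a pure phase and two modulations, a characteristic feature of chirp-type symbols that one must track carefully so that the supremum over $z_{0}$ in the Wiener amalgam norm collapses to a single $z_{0}$-independent estimate. This same mechanism would extend verbatim to Proposition \ref{pron} by writing $\mathrm{sinc}^{n}(\xi)=\int_{[-1/2,1/2]^{n}} e^{-2\pi i(t_{1}+\cdots+t_{n})\xi}\,dt$ and repeating the uniform chirp estimate over the (still compact) parameter region.
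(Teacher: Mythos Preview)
Your argument is correct. The paper itself does not prove Proposition~\ref{pro2}; it simply quotes the result from \cite{ACHA2018}, so there is no in-paper proof to compare against line by line.

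That said, your approach is worth commenting on because of how it interacts with the surrounding material. You represent $\Theta^{1}$ as a compactly parametrised average of chirps $F_{t}(x,\omega)=e^{-2\pi i t\,x\cdot\omega}$ and then prove, by the quadratic-phase factorisation, that $\|F_{t}\|_{W(\mathcal{F}L^{1},L^{\infty})}$ is actually independent of the base point $z_{0}$ and uniformly bounded for $|t|\le 1/2$. This is essentially a parametrised version of Proposition~\ref{pro1} (the chirp estimate), proved from scratch, followed by Minkowski/Tonelli at the level of the scalar STFT. The paper, by contrast, imports Proposition~\ref{pro2} and Proposition~\ref{pro1} as separate black boxes from \cite{ACHA2018} and then obtains Proposition~\ref{pron} by induction via the algebra property of $W(\mathcal{F}L^{1},L^{\infty})$. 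Your route is more self-contained and, as you observe, handles $\Theta^{n}$ directly by writing $\mathrm{sinc}^{n}(\xi)=\int B_{n}(t)e^{-2\pi i t\xi}\,dt$ (or the $n$-fold integral you wrote), so it bypasses the inductive step entirely. One small remark: your invocation of ``Minkowski's integral inequality'' for the Banach space $W(\mathcal{F}L^{1},L^{\infty})$ is cleanest when carried out, as you implicitly do, at the scalar level---fix $z_{0}$, interchange the $t$-integral with the Fourier transform by Fubini, and only then take $\sup_{z_{0}}$---rather than via an abstract Bochner integral in a non-separable space.
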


The previous property is true for any $\Theta^{n}$, $n\in\mathbb{N}_{+}$, as
shown below.

\begin{proposition}
\label{pron}  For $n\in\mathbb{N}_{+}$, the function $\Theta^{n}$ defined in
\eqref{nCohenkerneln} belongs to the Wiener algebra $W(\mathcal{F}
L^{1},L^{\infty})({\mathbb{R}^{2d}})$.
\end{proposition}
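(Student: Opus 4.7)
The plan is to proceed by induction on $n$, leveraging the fact that $\Theta^n = (\Theta^1)^n$ and that the Wiener amalgam space $W(\mathcal{F}L^1,L^\infty)(\mathbb{R}^{2d})$ is a Banach algebra under pointwise multiplication.

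The base case $n=1$ is precisely the content of Proposition \ref{pro2}. For the inductive step, I would observe that by the defining identity $\mathrm{sinc}^{n+1}(x\omega) = \mathrm{sinc}^{n}(x\omega)\cdot\mathrm{sinc}(x\omega)$, we have the factorization
\[
\Theta^{n+1}(x,\omega) = \Theta^{n}(x,\omega)\,\Theta^{1}(x,\omega).
\]
Assuming $\Theta^{n}\in W(\mathcal{F}L^1,L^\infty)(\mathbb{R}^{2d})$, I would then apply the product property \eqref{product} with $p=1$, $q=\infty$, taking $f=\Theta^1$ and $g=\Theta^n$. Since both factors lie in $W(\mathcal{F}L^1,L^\infty)$ (the hypothesis from Proposition \ref{pro2} for $f$ and the inductive hypothesis for $g$), the product lies in the same space, yielding $\Theta^{n+1}\in W(\mathcal{F}L^1,L^\infty)(\mathbb{R}^{2d})$.

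This completes the induction. There is essentially no obstacle once one has Proposition \ref{pro2} in hand and recognizes the multiplicative structure of the sinc powers; the only conceptual point is to invoke the fact that $W(\mathcal{F}L^1,L^\infty)(\mathbb{R}^{2d})$ is closed under pointwise multiplication, which follows from the general product property \eqref{product} by specializing the exponents.
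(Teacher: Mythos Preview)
Your proof is correct and is essentially identical to the paper's own argument: induction on $n$ with base case Proposition~\ref{pro2} and inductive step via the factorization $\Theta^{n+1}=\Theta^{n}\cdot\Theta^{1}$ together with the pointwise-multiplication algebra property of $W(\mathcal{F}L^{1},L^{\infty})(\mathbb{R}^{2d})$.
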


\begin{proof}
The result is attained by induction on $n$. We know that $\Theta^{1}\in
W(\mathcal{F} L^{1},L^{\infty})({\mathbb{R}^{2d}})$ by Proposition \ref{pro2}.
If we assume $\Theta^{n}\in W(\mathcal{F} L^{1},L^{\infty})({\mathbb{R}^{2d}%
})$, for a certain integer $n>1$, we obtain
\[
\Theta^{n+1}=\Theta^{n}\cdot\Theta^{1}\in W(\mathcal{F} L^{1},L^{\infty
})({\mathbb{R}^{2d}})\cdot W(\mathcal{F} L^{1},L^{\infty})({\mathbb{R}^{2d}%
})\hookrightarrow W(\mathcal{F} L^{1},L^{\infty})({\mathbb{R}^{2d}}),
\]
since the Banach space $W(\mathcal{F} L^{1},L^{\infty})({\mathbb{R}^{2d}})$ is
an algebra by pointwise product. This gives the claim.
\end{proof}

In \cite{ACHA2018} it was shown the following property for the chirp function:

\begin{proposition}
\label{pro1} The function $F(\zeta_{1},\zeta_{2})= e^{ 2\pi i \zeta_{1}
\zeta_{2}}$ belongs to $W(\mathcal{F} L^{1},L^{\infty})({\mathbb{R}^{2d}})$.
\end{proposition}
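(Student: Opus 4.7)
The plan is to compute the STFT of $F$ with respect to a Gaussian window and verify directly that the $W(\mathcal{F}L^1,L^\infty)(\mathbb{R}^{2d})$ norm is finite. Choose the window $g(y)=e^{-\pi|y|^2}$ on $\mathbb{R}^{2d}$; since this is a fixed nonzero Schwartz function, it suffices to prove that $\sup_z\int_{\mathbb{R}^{2d}}|V_gF(z,\zeta)|\,d\zeta<\infty$.

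First I would write out
\[
V_gF(z,\zeta)=\int_{\mathbb{R}^{2d}}e^{2\pi i y_1\cdot y_2}\,e^{-\pi|y-z|^2}\,e^{-2\pi i y\cdot\zeta}\,dy
\]
and collect the quadratic and linear terms in $y$. The exponent takes the form $-\pi y^{T}My+2\pi y\cdot w-\pi|z|^{2}$ with the complex-symmetric matrix
\[
M=\begin{pmatrix}I & -iI\\ -iI & I\end{pmatrix}\qquad\text{and}\qquad w=z-i\zeta\in\mathbb{C}^{2d}.
\]
Since $\mathrm{Re}\,M=I_{2d}$ is positive definite, the standard Gaussian integral formula applies. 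A direct computation yields $\det M=2^{d}$ and $M^{-1}=\tfrac{1}{2}\begin{pmatrix}I & iI\\ iI & I\end{pmatrix}$, giving
\[
V_gF(z,\zeta)=2^{-d/2}\,e^{-\pi|z|^{2}}\,e^{\pi w^{T}M^{-1}w}.
\]

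Expanding $w^{T}M^{-1}w$ and taking the modulus, the purely imaginary terms cancel and a square completion collapses the real part of the exponent to the symmetric expression
\[
|V_gF(z,\zeta)|=2^{-d/2}\exp\!\left(-\tfrac{\pi}{2}\bigl(|z_1-\zeta_2|^{2}+|z_2-\zeta_1|^{2}\bigr)\right).
\]
The key structural point is that the Gaussian decay couples $z_{1}$ with $\zeta_{2}$ and $z_{2}$ with $\zeta_{1}$, not $z$ with $\zeta$ directly. After the change of variables $\eta_{1}=\zeta_{1}-z_{2}$, $\eta_{2}=\zeta_{2}-z_{1}$ (with unit Jacobian), the $\zeta$-integral factors into two Gaussian integrals over $\mathbb{R}^{d}$, each equal to $2^{d/2}$, so $\int|V_gF(z,\zeta)|\,d\zeta=2^{d/2}$ independently of $z$, establishing the claim.

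The main obstacle I would expect is the algebraic bookkeeping: verifying positive definiteness of $\mathrm{Re}\,M$ so the complex Gaussian formula is legitimate, and carrying the phases through the expansion of $w^{T}M^{-1}w$ so that the resulting real decay ends up in the cross pair $(z_{1}-\zeta_{2},z_{2}-\zeta_{1})$ rather than in $z-\zeta$. This cross pairing is the essential mechanism that makes the $\zeta$-integral convergent uniformly in $z$; once it is observed the estimate is immediate and gives the explicit bound $\|F\|_{W(\mathcal{F}L^1,L^\infty)}\leq 2^{d/2}$.
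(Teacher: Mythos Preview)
Your computation is correct: with the Gaussian window the STFT of the chirp is an explicit Gaussian, and the cross pairing $(z_1-\zeta_2,\,z_2-\zeta_1)$ makes the $\zeta$-integral equal to $2^{d/2}$ uniformly in $z$, which is exactly the $W(\mathcal{F}L^1,L^\infty)$ condition.

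As for comparison with the paper: the paper does not prove Proposition~\ref{pro1} here at all, but simply cites \cite{ACHA2018}, and then remarks that an alternative route goes through the identification of $W(\mathcal{F}L^1,L^\infty)$ with the pointwise multipliers of the Feichtinger algebra $M^1$ together with the known invariance of $M^1$ under multiplication by second-order characters (cf.\ \cite{feizim,fei0,reiter}). Your argument is a direct, self-contained computation that avoids both the external reference and the multiplier machinery; it yields the explicit constant $2^{d/2}$ as a bonus. The multiplier route is more conceptual (it immediately gives the result for any nondegenerate quadratic phase, not only $\zeta_1\cdot\zeta_2$), whereas your approach is elementary and makes the Gaussian structure of $|V_gF|$ completely transparent, which is in fact the underlying reason the abstract results hold.
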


Since $W(\mathcal{F} L^{1},L^{\infty})({\mathbb{R}^{2d}})$ can be
characterized as the space of pointwise multipliers on the Feichtinger algebra
$W(\mathcal{F} L^{1},L^{1})({\mathbb{R}^{2d}})$ \cite[Corollary 3.2.10]%
{feizim}, the result in Proposition \ref{pro1} could also be deduced from
general results about the action of second order characters on the Feichtinger
algebra, cf.\ \cite{fei0,reiter}.

By Proposition \ref{pro1} and by the dilation properties for Wiener amalgam
spaces \eqref{dilAW0} we can state:

\begin{corollary}
\label{cor1} For $\zeta=(\zeta_{1},\zeta_{2})$, consider the function
$F_{J}(\zeta)=F( J \zeta)= e^{- 2\pi i \zeta_{1} \zeta_{2}}$.  Then $F_{J}\in
W(\mathcal{F} L^{1},L^{\infty})({\mathbb{R}^{2d}})$.
\end{corollary}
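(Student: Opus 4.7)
The plan is to view $F_J$ as a linear change of variables applied to $F$, and then invoke the dilation estimate \eqref{dilAW0} together with Proposition \ref{pro1}. Concretely, with $\zeta = (\zeta_1, \zeta_2) \in \mathbb{R}^{2d}$, the definition of $J$ in the notation section gives $J\zeta = (\zeta_2, -\zeta_1)$, so that indeed $F(J\zeta) = e^{2\pi i \zeta_2 \cdot (-\zeta_1)} = e^{-2\pi i \zeta_1 \zeta_2} = F_J(\zeta)$. Thus the question reduces to showing that the Wiener amalgam norm $W(\mathcal{F}L^1,L^\infty)(\mathbb{R}^{2d})$ is invariant (up to a constant) under the linear substitution $\zeta \mapsto J\zeta$.

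For this I would apply Proposition \ref{c1} with $A = J \in GL(2d,\mathbb{R})$, $p = 1$, and $q = \infty$. The matrix $J$ is orthogonal, so $J^\ast J = I_{2d}$ and $|\det J| = 1$; hence the two factors appearing on the right-hand side of \eqref{dilAW0} reduce to
\[
|\det J|^{(1/p - 1/q - 1)} = 1^{0} = 1, \qquad (\det(I + J^\ast J))^{1/2} = 2^{d},
\]
both finite. Inserting these in \eqref{dilAW0} yields
\[
\|F_J\|_{W(\mathcal{F}L^1,L^\infty)} \leq C\, 2^d\, \|F\|_{W(\mathcal{F}L^1,L^\infty)},
\]
and the right-hand side is finite by Proposition \ref{pro1}, giving the claim.

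There is essentially no obstacle here: the only ingredients are the identity $F_J = F \circ J$, the orthogonality of $J$ (to make the constants in \eqref{dilAW0} harmless), and Proposition \ref{pro1}. The only minor point worth double-checking is that the exponent $1/p - 1/q - 1$ in \eqref{dilAW0} indeed vanishes for the pair $(p,q) = (1,\infty)$ so that $|\det J| = 1$ enters harmlessly; this is what makes the argument a one-line application of a result already established in the paper.
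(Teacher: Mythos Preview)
Your proof is correct and follows exactly the approach indicated in the paper, which simply cites Proposition~\ref{pro1} together with the dilation estimate~\eqref{dilAW0} before stating the corollary. Your version just makes the constants explicit by computing $|\det J|=1$ and $\det(I+J^\ast J)=2^{2d}$.
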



\section{Smoothness of the Born-Jordan distribution of order $n$}

In the present section we compare the smoothness of the Born-Jordan
distribution of order $n$ with the Wigner distribution. In particular we prove
Theorem \ref{mainteo}.

We begin with the following global result, which in particular implies Theorem
\ref{teo2-zero}.

\begin{theorem}
\label{teo2} Let $f\in\mathcal{S}^{\prime}(\mathbb{R}^{d})$ be a signal, with
$Wf\in M^{p,q}({\mathbb{R}^{2d}})$ for some $1\leq p,q\leq\infty$. Then
\[
Q^{n}f\in M^{p,q}({\mathbb{R}^{2d}})
\]
and moreover
\begin{equation}
\label{eq1}(\nabla_{x}\cdot\nabla_{\omega})^{n} Q^{n}f\in M^{p,q}%
({\mathbb{R}^{2d}}).
\end{equation}

\end{theorem}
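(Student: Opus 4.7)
The plan is to work on the Fourier side of \eqref{e17}, where the convolution turns into pointwise multiplication and the multiplier property of $W(\mathcal{F}L^{1},L^{\infty})$ can be directly exploited.

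First I would apply the symplectic Fourier transform to \eqref{e17}, which gives
\[
\mathcal{F}_{\sigma}(Q^{n}f) = Af\cdot\Theta^{n},
\]
where $Af=\mathcal{F}_{\sigma}(Wf)$ is the ambiguity function \eqref{ambiguity} (here I use that $\mathcal{F}_{\sigma}$ is an involution turning convolution into pointwise product). Because $\mathcal{F}_{\sigma}F=\mathcal{F}F\circ J$ and $J$ is a linear isomorphism, $\mathcal{F}_{\sigma}$ maps $M^{p,q}({\mathbb{R}^{2d}})$ isomorphically onto $W(\mathcal{F}L^{p},L^{q})({\mathbb{R}^{2d}})$, so the assumption $Wf\in M^{p,q}$ translates to $Af\in W(\mathcal{F}L^{p},L^{q})$. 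Combining this with Proposition \ref{pron}, which furnishes $\Theta^{n}\in W(\mathcal{F}L^{1},L^{\infty})$, the multiplier property \eqref{product} yields $Af\cdot\Theta^{n}\in W(\mathcal{F}L^{p},L^{q})$, and inverting $\mathcal{F}_{\sigma}$ delivers $Q^{n}f\in M^{p,q}$.

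For \eqref{eq1}, I would use that, with respect to $\mathcal{F}_{\sigma}$ based on $\sigma(z,\zeta)=\omega\cdot\zeta_{1}-x\cdot\zeta_{2}$, the operators $\partial_{x_{j}}$ and $\partial_{\omega_{j}}$ correspond respectively to multiplication by $-2\pi i\zeta_{2,j}$ and $2\pi i\zeta_{1,j}$. Iterating,
\[
\mathcal{F}_{\sigma}\bigl[(\nabla_{x}\cdot\nabla_{\omega})^{n}Q^{n}f\bigr](\zeta)=(4\pi^{2})^{n}(\zeta_{1}\cdot\zeta_{2})^{n}\,\Theta^{n}(\zeta)\,Af(\zeta).
\]
The decisive cancellation is
\[
(\zeta_{1}\cdot\zeta_{2})^{n}\,\mathrm{sinc}^{n}(\zeta_{1}\cdot\zeta_{2})=\pi^{-n}\sin^{n}(\pi\,\zeta_{1}\cdot\zeta_{2}),
\]
so the polynomial prefactor is absorbed by the zeros of $\mathrm{sinc}^{n}$. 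Another application of \eqref{product} and inversion of $\mathcal{F}_{\sigma}$ then reduces the task to showing $\sin^{n}(\pi\,\zeta_{1}\cdot\zeta_{2})\in W(\mathcal{F}L^{1},L^{\infty})({\mathbb{R}^{2d}})$.

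This last claim is the main technical point. Expanding via Euler's formula writes $\sin^{n}(\pi\,\zeta_{1}\cdot\zeta_{2})$ as a finite linear combination of chirps of the form $e^{2\pi i\lambda\,\zeta_{1}\cdot\zeta_{2}}$ with $\lambda\in\{(2k-n)/2:k=0,\ldots,n\}$. Proposition \ref{pro1} places the case $\lambda=1$ in $W(\mathcal{F}L^{1},L^{\infty})$; the dilation estimate \eqref{dilAW0} applied on ${\mathbb{R}^{2d}}$ with $A=\mathrm{diag}(\lambda I_{d},I_{d})$ extends this to every $\lambda\neq 0$, and the $\lambda=0$ term (which appears only when $n$ is even) is the constant function $1$, which lies trivially in $W(\mathcal{F}L^{1},L^{\infty})$. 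Summing finishes the proof. The main conceptual obstacle is spotting the $t^{n}\mathrm{sinc}^{n}(t)=\pi^{-n}\sin^{n}(\pi t)$ cancellation; once it is in place, everything reduces to a bookkeeping exercise with the multiplier and dilation machinery already established for the kernels $\Theta^{n}$.
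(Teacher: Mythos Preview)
Your proof is correct and follows essentially the same route as the paper: pass to the symplectic Fourier side, use $\Theta^{n}\in W(\mathcal{F}L^{1},L^{\infty})$ together with the multiplier property \eqref{product} for the first claim, and for \eqref{eq1} exploit the cancellation $(\zeta_{1}\cdot\zeta_{2})^{n}\,\mathrm{sinc}^{n}(\zeta_{1}\cdot\zeta_{2})=\pi^{-n}\sin^{n}(\pi\zeta_{1}\cdot\zeta_{2})$ and then show that $\sin^{n}(\pi\zeta_{1}\cdot\zeta_{2})\in W(\mathcal{F}L^{1},L^{\infty})$. The only cosmetic difference is in this last step: the paper first proves $\sin(\pi\zeta_{1}\cdot\zeta_{2})\in W(\mathcal{F}L^{1},L^{\infty})$ via Euler's formula and a dilation by $1/\sqrt{2}$, and then inducts using the algebra property of $W(\mathcal{F}L^{1},L^{\infty})$, whereas you expand $\sin^{n}$ directly into a finite sum of chirps $e^{2\pi i\lambda\zeta_{1}\cdot\zeta_{2}}$ and treat each one by a single dilation---the two arguments are interchangeable.
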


Here we used the notation
\[
\nabla_{x}\cdot\nabla_{\omega}:=\sum_{j=1}^{d} \frac{\partial^{2}}%
{\partial{x_{j}}\partial{\omega_{j}}}.
\]

\begin{proof}
We first show $Q^{n} f\in M^{p,q}({\mathbb{R}^{2d}})$. Taking the symplectic
Fourier transform in \eqref{bj} we are reduced to prove that
\[
\Theta^{n} \mathcal{F}_{\sigma}(Wf)=\Theta^{n} Af\in W(\mathcal{F} L^{p}%
,L^{q})
\]
where $\mathcal{F}_{\sigma}(Wf)=Af$ is the ambiguity function of $f$ in
\eqref{ambiguity}. The claim is attained using the product property
\eqref{product}: by Proposition \ref{pron}, the function $\Theta^{n}$ is in
$W(\mathcal{F} L^{1},L^{\infty})$ and by assumption $Wf\in M^{p,q}%
({\mathbb{R}^{2d}})$ so that $\mathcal{F}(Wf)\in W(\mathcal{F} L^{p},L^{q})$,
and therefore $\mathcal{F}_{\sigma}(Wf)(\zeta)=\mathcal{F}(Wf)(J\zeta) \in
W(\mathcal{F} L^{p},L^{q})$ by Proposition \ref{c1}.

We now prove \eqref{eq1}. Taking the symplectic Fourier transform we see that
it is sufficient to prove that
\[
(\zeta_{1}\zeta_{2})^{n}\, \mathrm{sinc}^{n}(\zeta_{1}\zeta_{2})
\mathcal{F}_{\sigma}Wf=\frac{1}{\pi^{n}}\sin^{n}(\pi\zeta_{1}\zeta_{2})
\mathcal{F}_{\sigma}Wf\in W(\mathcal{F} L^{p},L^{q}).
\]
We have,
\begin{equation}
\label{sin}\sin(\pi\zeta_{1}\zeta_{2})=\frac{e^{\pi i \zeta_{1}\zeta_{2}%
}-e^{-\pi i \zeta_{1}\zeta_{2}}}{2i}\in W(\mathcal{F} L^{1},L^{\infty}),
\end{equation}
by Proposition \ref{pro1}, Corollary \ref{cor1} and Proposition \ref{c1}, with
the scaling $\lambda=1/\sqrt{2}$.

Hence, for $n=1$,
\[
\frac{1}{\pi}\sin(\pi\zeta_{1}\zeta_{2}) \mathcal{F}_{\sigma}Wf\in
W(\mathcal{F} L^{p},L^{q})
\]
by the product property \eqref{product}. Assume now that, for a certain
$n\in\mathbb{N}_{+}$,
\[
\frac{1}{\pi^{n}}\sin^{n}(\pi\zeta_{1}\zeta_{2}) \mathcal{F}_{\sigma}Wf\in
W(\mathcal{F} L^{p},L^{q}).
\]
Then
\[
\frac{1}{\pi^{n+1}}\sin^{n+1}(\pi\zeta_{1}\zeta_{2}) \mathcal{F}_{\sigma
}Wf=\underset{\in W(\mathcal{F} L^{1},L^{\infty})}{\underbrace{\frac1\pi
\sin(\pi\zeta_{1}\zeta_{2})}}\cdot\underset{\in W(\mathcal{F} L^{p}%
,L^{q})}{\underbrace{\frac{1}{\pi^{n}}\sin^{n}(\pi\zeta_{1}\zeta_{2})
\mathcal{F}_{\sigma}Wf}} \in W(\mathcal{F} L^{p},L^{q}),
\]
by \eqref{sin} and the product property \eqref{product} again. By induction we
attain the result.
\end{proof}

We are now ready to prove Theorem \ref{mainteo}.

\begin{proof}
[Proof of Theorem \ref{mainteo}]Consider $n\in\mathbb{N}_{+}$. We will apply
Proposition \ref{pro3} to the $2n$-th order operator $P^{n}$, where
$P=\nabla_{x}\cdot\nabla_{\omega}$ in ${\mathbb{R}^{2d}}$. The non
characteristic directions for $P^{n}$ are given by the vectors $\zeta
=(\zeta_{1},\zeta_{2})\in\mathbb{R}^{d}\times\mathbb{R}^{d}$, satisfying
$\zeta_{1}\cdot\zeta_{2}\not =0$. By \eqref{eq1} (with $p=\infty$) we have
\[
WF_{\mathcal{F} L^{q}}(P^{n} Q^{n} f)=\emptyset,
\]
because $\varphi F\in\mathcal{F} L^{q}$ if $\varphi\in C^{\infty}%
_{c}({\mathbb{R}^{2d}})$ and $F\in M^{\infty,q}({\mathbb{R}^{2d}})$ (here
$F=P^{n} Q^{n}f$). Hence we obtain
\[
(z,\zeta)\not \in WF_{\mathcal{F} L^{q}}(P^{n} Q^{n}f),\quad\forall
(z,\zeta)\,\,\mbox{such \,that}\,\, \,\zeta=(\zeta_{1},\zeta_{2}%
),\,\,\zeta_{1}\cdot\zeta_{2}\not =0.
\]
Since $\zeta$ is non characteristic for the operator $P^{n}$, by Proposition
\ref{pro3} we infer
\[
(z,\zeta)\not \in WF_{\mathcal{F} L^{q}_{2n}}(Q^{n}f)
\]
for every $z\in{\mathbb{R}^{2d}}$.
\end{proof}

\begin{proof}
[Proof of Corollary \ref{cor}]Apply Theorem \ref{mainteo} with $q=2$. Indeed,
for $f\in L^{2}(\mathbb{R}^{d})$ Moyal's formula gives $Wf\in L^{2}%
({\mathbb{R}^{2d}})=M^{2,2}(\mathbb{R}^{d})\subset M^{\infty,2}({\mathbb{R}%
^{2d}})$ (cf.\ \eqref{wigner2}). Observe that the $\mathcal{F} L^{2}_{2n}$
wave-front set coincides with the $H^{2n}$ wave-front set.
\end{proof}

The proof of Theorem \ref{teo3} requires Lemma 5.1 in \cite{ACHA2018}:

\begin{lemma}
\label{lemma5.1}  Let $\chi\in C^{\infty}_{c}(\mathbb{R})$. Then the function
$\chi(\zeta_{1} \zeta_{2})$ belongs to $W(\mathcal{F} L^{1},L^{\infty
})({\mathbb{R}^{2d}})$.
\end{lemma}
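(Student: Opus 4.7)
My plan is to reduce the lemma to the chirp result (Proposition \ref{pro1} and Corollary \ref{cor1}) via Fourier inversion, after which the dilation estimate of Proposition \ref{c1} finishes the argument.

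First, since $\chi \in C_c^\infty(\mathbb{R}) \subset \mathcal{S}(\mathbb{R})$, its Fourier transform $\hat\chi$ lies in $\mathcal{S}(\mathbb{R})$, and Fourier inversion gives the absolutely convergent representation
\[
\chi(\zeta_1 \cdot \zeta_2) = \int_{\mathbb{R}} \hat\chi(\eta)\, e^{2\pi i \eta\, \zeta_1 \cdot \zeta_2}\, d\eta.
\]
The idea is then to regard $\chi(\zeta_1 \cdot \zeta_2)$ as a superposition of chirps and to control each chirp in $W(\mathcal{F}L^1, L^\infty)({\mathbb{R}^{2d}})$ with a bound that is polynomial in $\eta$.

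Second, for every $\eta \in \mathbb{R}$ the chirp $F_\eta(\zeta) := e^{2\pi i \eta\, \zeta_1 \cdot \zeta_2}$ is an isotropic dilation of one of the chirps already handled in the paper: setting $A = \sqrt{|\eta|}\, I_{2d\times 2d}$ we have $F_\eta(\zeta) = F_{\mathrm{sgn}(\eta)}(A\zeta)$, and $F_{\pm 1} \in W(\mathcal{F}L^1,L^\infty)({\mathbb{R}^{2d}})$ by Proposition \ref{pro1} and Corollary \ref{cor1}. Applying Proposition \ref{c1} with $(p,q)=(1,\infty)$, the exponent $1/p-1/q-1$ vanishes, so the $|\det A|$ factor drops out, while $(\det(I+A^{\ast}A))^{1/2}=(1+|\eta|)^d$. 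Hence
\[
\|F_\eta\|_{W(\mathcal{F}L^1,L^\infty)} \leq C\,(1+|\eta|)^d.
\]

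Third, to assemble the pieces I would fix a window $\phi \in C_c^\infty({\mathbb{R}^{2d}})$ and interchange the order of integration by Fubini in
\[
\widehat{\chi(\zeta_1 \cdot \zeta_2)\,\phi(\,\cdot\,-x)}(\xi) = \int_{\mathbb{R}} \hat\chi(\eta)\, \widehat{F_\eta\,\phi(\,\cdot\,-x)}(\xi)\, d\eta,
\]
then take the $L^1_\xi$ norm and finally the supremum over $x \in {\mathbb{R}^{2d}}$. Inserting the bound from step two yields
\[
\|\chi(\zeta_1 \cdot \zeta_2)\|_{W(\mathcal{F}L^1,L^\infty)} \leq C \int_{\mathbb{R}} |\hat\chi(\eta)|\,(1+|\eta|)^d\, d\eta < \infty,
\]
since $\hat\chi$ is rapidly decreasing. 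The only delicate point is the bookkeeping in step two: one must verify that, with $(p,q)=(1,\infty)$, the two factors in Proposition \ref{c1} combine to give at most polynomial growth in $|\eta|$, which is then absorbed by the Schwartz decay of $\hat\chi$.
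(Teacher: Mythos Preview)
Your argument is correct. The present paper does not give its own proof of this lemma; it merely quotes Lemma~5.1 of \cite{ACHA2018}, and the proof there follows essentially the route you describe: write $\chi(\zeta_1\zeta_2)$ as a superposition of chirps via Fourier inversion, bound each chirp $e^{2\pi i\eta\,\zeta_1\zeta_2}$ in $W(\mathcal{F}L^1,L^\infty)$ using the dilation estimate (Proposition~\ref{c1} with $(p,q)=(1,\infty)$, yielding the polynomial growth $(1+|\eta|)^{d}$), and integrate against the rapidly decreasing $\hat\chi$.
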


As announced in the introduction, the smoothing phenomena of the $Q^{n}$
distributions do not involve the whole phase space. We do not have any gain in
the directions $\zeta_{1}\cdot\zeta_{2}=0$ as it comes up clearly from the
proof of the following issue.

\begin{proof}
[Proof of Theorem \ref{teo3}]The pattern is similar to that of Theorem 1.4 in
\cite{ACHA2018}. We detail the main steps for sake of clarity. The idea is to
test the estimate \eqref{test} using rescaled Gaussian functions
$f(x)=\varphi(\lambda x)$, with $\lambda>0$ large parameter. We shall prove
that, restricting to a neighbourhood of $\zeta_{1}\cdot\zeta_{2}=0$, the
constrain $q_{1}\geq q_{2}$ must be satisfied.

An easy computation (see e.g.\ \cite[Formula (4.20)]{grochenig}) yields
\begin{equation}
\label{wignerdil}W(\varphi(\lambda\, \cdot))(x,\omega)=2^{d/2} \lambda^{-d}
\varphi(\sqrt{2}\lambda\, x)\varphi(\sqrt{2}\lambda^{-1}\, \omega).
\end{equation}
For every $1\leq p,q\leq\infty$, the above formula gives
\[
\|W(\varphi(\lambda\, \cdot))\|_{M^{p,q}}=2^{d/2} \lambda^{-d}\| \varphi
(\sqrt{2}\lambda\, \cdot) \|_{M^{p,q}}\|\varphi(\sqrt{2}\lambda^{-1}\, \cdot)
\|_{M^{p,q}}.
\]
By the dilation properties of Gaussians in Lemma \ref{lemma5.2-zero}
\begin{equation}
\label{eqa3}\|W(\varphi(\lambda\, \cdot))\|_{M^{p,q}}\asymp\lambda
^{-2d+d/q+d/p}\quad\mathrm{as}\ \lambda\to+\infty.
\end{equation}
We now study the $M^{p,q}$-norm of the BJDn $Q^{n}(\varphi(\lambda\, \cdot))$.
The idea is to estimate such a norm from below obtaining the same expansion as
in \eqref{eqa3}.
\[
\|Q^{n}(\varphi(\lambda\, \cdot))\|_{M^{p,q}}=\|\mathcal{F}_{\sigma}%
(\Theta^{n}) \ast W(\varphi(\lambda\, \cdot))\|_{M^{p,q}}.
\]
By taking the symplectic Fourier transform and using Lemma \ref{lemma5.1} and
the product property \eqref{product} we have
\begin{align*}
\|\mathcal{F}_{\sigma}(\Theta^{n}) \ast W(\varphi(\lambda\, \cdot
))\|_{M^{p,q}} & \asymp\|\Theta^{n} \mathcal{F}_{\sigma}[ W(\varphi(\lambda\,
\cdot))]\|_{W(\mathcal{F} L^{p},L^{q})}\\
& \gtrsim\|\Theta^{n}(\zeta_{1},\zeta_{2}) \chi(\zeta_{1}\zeta_{2}%
)\mathcal{F}_{\sigma}[ W(\varphi(\lambda\, \cdot))]\|_{W(\mathcal{F}
L^{p},L^{q})}%
\end{align*}
for any $\chi\in C^{\infty}_{c}(\mathbb{R})$ and $n\in\mathbb{N}_{+}$.
Choosing $\chi$ supported in the interval $[-1/4,1/4]$ and $\chi\equiv1$ in
the interval $[-1/8,1/8]$ (the latter condition will be used later), we write
\[
\chi(\zeta_{1} \zeta_{2})=\chi(\zeta_{1} \zeta_{2}) \Theta^{n}(\zeta_{1}%
,\zeta_{2})\Theta^{-n}(\zeta_{1},\zeta_{2})\tilde{\chi}(\zeta_{1} \zeta_{2}),
\]
with $\tilde{\chi}\in C^{\infty}_{c}(\mathbb{R})$ supported in $[-1/2,1/2]$
and $\tilde{\chi}=1$ on $[-1/4,1/4]$, therefore on the support of $\chi$.
Since by Lemma \ref{lemma5.1} the function $\Theta^{-n}(\zeta_{1},\zeta
_{2})\tilde{\chi}(\zeta_{1} \zeta_{2})$ belongs to $W(\mathcal{F}
L^{1},L^{\infty})$, by the product property the last expression can be
estimated from below as
\[
\gtrsim\| \chi(\zeta_{1}\zeta_{2})\mathcal{F}_{\sigma}[ W(\varphi(\lambda\,
\cdot))]\|_{W(\mathcal{F} L^{p},L^{q})}.
\]
We are ended up with the same object already estimated in the proof of Theorem
1.4 in \cite{ACHA2018}, were it was shown that
\begin{equation}
\label{e27}\| \chi(\zeta_{1}\zeta_{2})\mathcal{F}_{\sigma}[ W(\varphi
(\lambda\, \cdot))]\|_{W(\mathcal{F} L^{p},L^{q})}\gtrsim\lambda
^{-2d+d/p+d/q}\quad\mathrm{as}\ \lambda\to+\infty
\end{equation}
Comparing \eqref{e27} with \eqref{eqa3} we obtain the desired conclusion.
\end{proof}

\section{Pseudodifferential Calculus}

The Weyl quantization was introduced by Weyl in \cite{Weyl1927} and is the
$n=0$ case of the Born-Jordan quantization of order $n$ in \eqref{BJNQ}:
\[
{a\in\mathcal{S}^{\prime}({\mathbb{R}^{2d}})}\mapsto\widehat{A}_{\mathrm{W}%
}=\operatorname*{Op}\nolimits_{\mathrm{W}}(a)=\left(  \tfrac{1}{2\pi\hbar
}\right)  ^{d}\int_{{\mathbb{R}^{2d}}} \mathcal{F}_{\sigma}a(z)\widehat{T}%
(z)dz.
\]
Comparing with \eqref{BJNQ}, we infer the symbol relation
\[
\mathcal{F}_{\sigma}a_{BJ,n} \Theta^{n}= \mathcal{F}_{\sigma}a_{W}
\]
(observe that $a_{BJ,n}$ denotes the symbol of $\widehat{A}_{\mathrm{BJ},n}$
whereas $a_{W}$ is the Weyl symbol) that is
\begin{equation}
\label{symbaBJn}a_{BJ,n} \ast\mathcal{F}_{\sigma}(\Theta^{n})=a_{W}.
\end{equation}

Using the weak definition for Weyl operators via the Wigner distribution
\[
\langle Op_{W}(a)f,g\rangle= \langle a, W(g,f)\rangle,\quad a\in
\mathcal{S}^{\prime}({\mathbb{R}^{2d}}),\,\,f,g\in\mathcal{S}(\mathbb{R}^{d})
\]
and the convolution property (whenever is well-defined)
\[
\langle F\ast G,H\rangle=\langle F , H\ast G\rangle
\]
we can also define, for $n\in\mathbb{N}$, the $n$-th Born-Jordan
pseudodifferential operator with symbol $a\in\mathcal{S}^{\prime}%
(\mathbb{R}^{d})$ by
\begin{equation}
\label{BJpseudoN}\langle Op_{BJ,n}(a) f,g\rangle=\langle a, Q^{n}(g,f)\rangle,
\quad f,g\in\mathcal{S}(\mathbb{R}^{d}).
\end{equation}
(Observe that $n=1$ is the standard BJ operator, whereas $n=0$ gives the Weyl one).

We aim at studying continuity properties of such operators and of the related
distributions on modulation spaces.

First, we analyze the quadratic representations $Q^{n}$.

\begin{theorem}
\label{cohenbound}  Assume $s\geq0$, $p_{1},q_{1},p,q \in[1,\infty]$ such
that
\begin{equation}
\label{Ch}2\min\{ \frac1{p_{1}},\frac1{q_{1}}\}\geq\frac1p +\frac1 q.
\end{equation}
If $f\in M^{p_{1},q_{1}}_{v_{s}}(\mathbb{R}^{d})$ the Cohen distribution
$Q^{n}f$, $n\in\mathbb{N}_{+}$, is in $M^{p,q}_{1\otimes v_{s}}({\mathbb{R}%
^{2d}})$, with
\begin{equation}
\label{cohenM}\|Q^{n}f\|_{M^{p,q}_{1\otimes v_{s}}({\mathbb{R}^{2d}})}%
\lesssim\|\Theta^{n}\|_{W(\mathcal{F} L^{1},L^{\infty})({\mathbb{R}^{2d}})}
\|f\|^{2}_{M^{p_{1},q_{1}}_{v_{s}}(\mathbb{R}^{d})}.
\end{equation}

\end{theorem}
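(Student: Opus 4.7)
The plan is to pass to the symplectic Fourier side, where the convolution defining $Q^n$ becomes pointwise multiplication, and then combine the pointwise multiplier property of $\Theta^n$ (Proposition \ref{pron}) with a bilinear modulation-space estimate for the Wigner/ambiguity function.

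Concretely, applying $\mathcal{F}_\sigma$ to $Q^n f = Wf \ast \mathcal{F}_\sigma(\Theta^n)$ gives
$$\mathcal{F}_\sigma(Q^n f) = \Theta^n \cdot Af,$$
where $Af = \mathcal{F}_\sigma(Wf)$ is the ambiguity function. Via the Fourier identification $\mathcal{F}(M^{p,q}_m) = W(\mathcal{F}L^p, L^q)_{\widetilde m}$, the bound \eqref{cohenM} is equivalent to
$$\|\Theta^n \cdot Af\|_{W(\mathcal{F}L^p, L^q)_{\widetilde m}} \lesssim \|\Theta^n\|_{W(\mathcal{F}L^1, L^\infty)} \|f\|^2_{M^{p_1,q_1}_{v_s}},$$
where $\widetilde m$ is the $v$-moderate transplantation of $1 \otimes v_s$ through $\mathcal{F}_\sigma$. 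I would then invoke the weighted version of the product property \eqref{product} --- which still holds because the unweighted algebra $W(\mathcal{F}L^1, L^\infty)$ multiplies every $v$-moderate amalgam space --- to peel off the $\Theta^n$ factor and reduce the task to $\|Af\|_{W(\mathcal{F}L^p, L^q)_{\widetilde m}} \lesssim \|f\|^2_{M^{p_1,q_1}_{v_s}}$, i.e.\ the Wigner estimate $\|Wf\|_{M^{p,q}_{1 \otimes v_s}} \lesssim \|f\|^2_{M^{p_1,q_1}_{v_s}}$.

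The main obstacle is exactly this last bilinear Wigner bound, for which the hypothesis \eqref{Ch} is the sharp sufficient condition. The classical way to derive it is: rewrite $Wf$ in terms of the short-time Fourier transform $V_f f$ (up to a chirp and a linear rescaling, cf.\ \cite{grochenig}); insert the STFT reproducing formula with an auxiliary Schwartz window $g$ to replace $V_f f$ by an integral involving $V_g f$ tested against itself; then estimate the resulting mixed $L^{p,q}_{\widetilde m}$-integral via H\"older's and Young's inequalities. The range of indices for which such bilinear inequalities close is precisely \eqref{Ch}, and the weight $v_s$ is transported along because it is $v$-moderate. The most delicate technicality is the careful bookkeeping of the weight $1 \otimes v_s$ through the symplectic Fourier transform and the linear change of variables linking $Wf$ to $V_f f$; once that is handled, concatenating the three displayed estimates yields \eqref{cohenM}.
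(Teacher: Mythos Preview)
Your argument is correct and structurally matches the paper's: both rely on Proposition \ref{pron} to place $\Theta^n$ in $W(\mathcal{F}L^1,L^\infty)$ (equivalently $\mathcal{F}_\sigma\Theta^n\in M^{1,\infty}$) and then combine this with a bilinear Wigner/Cohen-class estimate under the index condition \eqref{Ch}. The only difference is packaging: the paper invokes \cite[Theorem~1.2]{CNIMRN2018} as a black box --- that result already says $\|Qf\|_{M^{p,q}_{1\otimes v_s}}\lesssim\|\theta\|_{M^{1,\infty}}\|f\|^2_{M^{p_1,q_1}_{v_s}}$ for any Cohen kernel $\theta\in M^{1,\infty}$ --- whereas you unpack that citation by passing to the symplectic Fourier side, peeling off $\Theta^n$ via the product property \eqref{product}, and sketching the bilinear $Wf$ bound directly. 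Your route is more self-contained; the paper's is shorter by deferring the bilinear estimate to the reference.
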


\begin{proof}
In \cite[Theorem 1.2]{CNIMRN2018} two of us proved that, if the Cohen kernel
$\theta$, defined in \eqref{Cohenkernel}, is in $M^{1,\infty}({\mathbb{R}%
^{2d}})$, then the related Cohen distribution $Qf$ satisfies
\[
\|Q^{n}f\|_{M^{p,q}_{1\otimes v_{s}}({\mathbb{R}^{2d}})}\lesssim
\|\theta\|_{M^{1,\infty}({\mathbb{R}^{2d}})}\|f\|^{2}_{M^{p_{1},q_{1}}_{v_{s}%
}(\mathbb{R}^{d})}
\]
where the indices $p_{1},q_{1}, p,q \in[1,\infty]$ are related by condition \eqref{Ch}.

By Proposition \ref{pron}, the function $\Theta^{n}$ is in $W(\mathcal{F}
L^{1},L^{\infty})$, so that the BJ kernel $\mathcal{F}_{\sigma}(\Theta^{n})$
is in $M^{1,\infty}({\mathbb{R}^{2d}})$ with $\|\mathcal{F}_{\sigma}%
(\Theta^{n})\|_{M^{1,\infty}}\asymp\|\Theta^{n}\|_{W(\mathcal{F}
L^{1},L^{\infty})}$  and the thesis follows. 
\end{proof}

We write $q^{\prime}$ for the  conjugate exponent of $q\in\lbrack1,\infty]$;
it is defined by $ 1/q+1/q^{\prime}=1$.  The $n$-th Born-Jordan operator
enjoys the same continuity properties as for the $n=1$ case, proved in
\cite[Theorem 1.1]{cgnb}. Indeed, we can state: 

\begin{theorem}
\label{Charpseudo} Consider $1\leq p,q,r_{1},r_{2}\leq\infty$, such that
\begin{equation}
\label{indicitutti}p\leq q^{\prime}%
\end{equation}
\noindent and
\begin{equation}
\label{indiceq}\quad q \leq\min\{r_{1},r_{2},r_{1}^{\prime},r_{2}^{\prime}\}.
\end{equation}
Then the Born-Jordan operator ${Op}_{{BJ,n}}(a)$, from  $\mathcal{S}%
(\mathbb{R}^{d})$ to $\mathcal{S}^{\prime}(\mathbb{R}^{d})$, having  symbol $a
\in M^{p,q}(\mathbb{R}^{2d})$, extends uniquely to a bounded  operator on
$\mathcal{M}^{r_{1},r_{2}}(\mathbb{R}^{d})$, with the estimate
\begin{equation}
\label{stimaA}\|{Op}_{{BJ,n}}(a) f\|_{\mathcal{M}^{r_{1},r_{2}}}
\lesssim\|a\|_{M^{p,q}}\|f\|_{\mathcal{M}^{r_{1},r_{2}}},\quad f\in\mathcal{M}
^{r_{1},r_{2}}.
\end{equation}
Vice versa, if this conclusion holds true, the constraints
\eqref{indicitutti} is satisfied and it must hold
\begin{equation}
\label{stimanew}\max\left\{ \frac1{r_{1}},\frac1{r_{2}},\frac1{r_{1}^{\prime}%
},\frac1{r_{2}^{\prime}}\right\} \leq\frac1q+\frac1p,
\end{equation}
that is \eqref{indiceq} for $p=\infty$. 
\end{theorem}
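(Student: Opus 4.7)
The plan is to reduce the forward implication to the $n=1$ case treated in \cite{cgnb} via a convolution identity relating Born--Jordan symbols of different orders, and to treat the converse by adapting the direct testing argument of \cite{cgnb} to general $n$.

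For the forward direction, I would observe that by the symbol relation \eqref{symbaBJn} applied for both orders $n$ and $1$, the identity $Op_{BJ,n}(a) = Op_{BJ,1}(b)$ holds with $b = a\ast\mathcal{F}_\sigma(\Theta^{n-1})$, as follows by equating the corresponding Weyl symbols and transferring through the symplectic Fourier transform (using $\mathcal{F}_\sigma(FG)=\mathcal{F}_\sigma F\ast\mathcal{F}_\sigma G$ and the involutive property of $\mathcal{F}_\sigma$). By Proposition \ref{pron}, $\Theta^{n-1}\in W(\mathcal{F}L^1,L^\infty)({\mathbb{R}^{2d}})$ for every $n\geq 1$ (with $\Theta^0\equiv 1$ trivially in this algebra for the case $n=1$), hence $\mathcal{F}_\sigma(\Theta^{n-1})\in M^{1,\infty}({\mathbb{R}^{2d}})$ via the Fourier duality $\mathcal{F}(M^{p,q})=W(\mathcal{F}L^p,L^q)$ combined with Proposition \ref{c1}. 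The standard convolution relation $M^{p,q}\ast M^{1,\infty}\hookrightarrow M^{p,q}$ for modulation spaces would then yield $b\in M^{p,q}$ with $\|b\|_{M^{p,q}}\lesssim \|a\|_{M^{p,q}}$, and applying \cite[Theorem 1.1]{cgnb} to $Op_{BJ,1}(b)$ under the hypotheses \eqref{indicitutti} and \eqref{indiceq} would deliver \eqref{stimaA}.

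For the converse direction, I would adapt the testing strategy of the converse part of \cite[Theorem 1.1]{cgnb}. Assuming that \eqref{stimaA} holds for every $a\in M^{p,q}$ and $f\in\mathcal{M}^{r_1,r_2}$, I would probe with families of modulated-translated Gaussian symbols $a_{z_0,\zeta_0}(z)=M_{\zeta_0}T_{z_0}\varphi(z)$ and rescaled Gaussian signals $f_\mu(x)=\varphi(\mu x)$, where $\varphi(x)=e^{-\pi|x|^2}$. Using the weak definition \eqref{BJpseudoN}, the explicit formula \eqref{wignerdil} for the Wigner distribution of a rescaled Gaussian, and the smoothness/decay properties of $\mathcal{F}_\sigma(\Theta^n)$ encoded in Proposition \ref{pron} and Lemma \ref{lemma5.1}, I would compute the leading asymptotics of $\|Op_{BJ,n}(a_{z_0,\zeta_0})f_\mu\|_{\mathcal{M}^{r_1,r_2}}$ as $\mu$, $|z_0|$ and $|\zeta_0|$ tend to infinity along suitable directions, and compare them with the target bound $\|a_{z_0,\zeta_0}\|_{M^{p,q}}\|f_\mu\|_{\mathcal{M}^{r_1,r_2}}$. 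The balancing of the resulting powers should force the constraints \eqref{indicitutti} and, when $p=\infty$, the condition \eqref{stimanew}.

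The main obstacle will lie in the converse: the precise asymptotic analysis of $Op_{BJ,n}$ on the chosen Gaussian test families involves oscillatory integrals weighted by $\Theta^n=\operatorname{sinc}^n(\zeta_1\cdot\zeta_2)$, which are more delicate than in the Weyl or $n=1$ case and whose principal contributions must be isolated without cancellation. The way around this is to localize via Lemma \ref{lemma5.1} to a neighborhood of $\zeta_1\cdot\zeta_2=0$, where $\Theta^n$ is essentially constant and equal to $1$; on this region one can factor $\Theta^n$ out of the leading-order asymptotics, so that the analysis reduces to the estimates already carried out for $n=1$ in \cite{cgnb}, producing the same sharp necessary conditions on $p,q,r_1,r_2$.
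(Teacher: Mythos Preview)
Your proposal is correct and follows essentially the same approach as the paper. For the forward direction, the paper argues by induction on $n$ (reducing $Op_{BJ,n+1}(a)$ to $Op_{BJ,n}(b)$ with $b=a\ast\mathcal{F}_\sigma\Theta^1$ and invoking $M^{p,q}\ast M^{1,\infty}\hookrightarrow M^{p,q}$), whereas you reduce in one step from order $n$ to order $1$ via $b=a\ast\mathcal{F}_\sigma\Theta^{n-1}$; these are equivalent uses of the same convolution relation and of Proposition~\ref{pron}. For the converse, the paper simply defers to the proof of \cite[Theorem 1.1]{cgnb} without further detail; your description of the Gaussian testing and of the localization near $\zeta_1\cdot\zeta_2=0$ via Lemma~\ref{lemma5.1} is a reasonable elaboration of that deferred argument, in the same spirit as the proof of Theorem~\ref{teo3}.
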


\begin{proof}
The sufficient conditions are proved by induction. The result holds true for
$n=1$ by Theorem \cite[Theorem 1.1]{cgnb}. Assume now that the result is true
for a certain $n\in\mathbb{N}_{+}$ and observe, by definition \eqref{BJNQ},
that
\[
Op_{BJ,n+1}(a)=Op_{BJ,n}(b),\quad\mbox{with}\quad a=b\ast\mathcal{F}_{\sigma
}\Theta.
\]
The thesis follows by the convolution relation $M^{p,q}({\mathbb{R}^{2d}})\ast
M^{1,\infty}({\mathbb{R}^{2d}})\hookrightarrow M^{p,q}({\mathbb{R}^{2d}})$.

The necessary conditions are obtained arguing exactly as for the case $n=1$,
for details we refer to the proof of Theorem 1.1 in \cite{cgnb}. 
\end{proof}

\section*{Technical notes}

The figures in the introduction were produced using LTFAT (The Large Time-Frequency Analysis Toolbox), cf.~\cite{ltfat} as well as the Time-Frequency Toolbox
(TFTB), distributed under the terms of the GNU Public Licence:
\begin{center}
http://tftb.nongnu.org/
\end{center} 
The bat sonar signal in Figure 3 was recorded as a .mat file in
the latter  toolbox. 
\section*{Acknowledgments}
The authors  would like to thank Professor Jean-Pierre Gazeau, 
for inspiring this work during the wonderful environment of the conference \emph{Quantum Harmonic Analysis and Symplectic Geometry}, April 21-24, 2018, Strobl, AUSTRIA.
Maurice de Gosson has been financed by the grant P27773 of the Austrian research Foundation FWF.  Monika D\"orfler
has been supported by the Vienna Science and Technology Fund (WWTF)
through project MA14-018. 

\vskip0.5truecm

\end{document}